\newtheorem{theorem}{Theorem}[section]
\newtheorem{proposition}[theorem]{Proposition}
\newtheorem{corollary}[theorem]{Corollary}
\newtheorem{lemma}[theorem]{Lemma}
\newtheorem{preremark}[theorem]{Remark}
\newtheorem{predefinition}[theorem]{Definition}
\newtheorem{preexample}[theorem]{Example}
\newenvironment{remark}{\begin{preremark}\rm}{\end{preremark}}
\newenvironment{definition}{\begin{predefinition}\rm}{\end{predefinition}}
\newenvironment{example}{\begin{preexample}\rm}{\end{preexample}}
\newcounter{itemcounter}
\newenvironment{items}{
   \begin{list}{\alph{itemcounter})}
   {\usecounter{itemcounter}\setlength{\topsep}{3 pt}
   \setlength{\leftmargin}{2 em}
   \setlength{\partopsep}{0 pt}\setlength{\itemsep}{0 pt}
      \setlength{\labelwidth}{2 em}
   }}{\end{list}}
\def\LT{\mathop{\rm LT}\nolimits}
\def\DF{\mathop{\rm DF}\nolimits}
\def\LF{\mathop{\rm LF}\nolimits}
\def\BF{\mathop{\rm BF}\nolimits}
\def\BT{\mathop{\rm BT}\nolimits}
\def\ND{\mathop{\rm ND}\nolimits}
\def\AS{\mathop{\rm AS}\nolimits}
\def\Mat{\mathop{\rm Mat}\nolimits}
\def\Supp{\mathop{\rm Supp}\nolimits}
\def\indO{{\mathop{\rm ind}\nolimits}_{\mathcal{O}}}
\def\maxdeg{\mathop{\rm maxdeg}\nolimits}
\let\epsilon=\varepsilon
\let\rho=\varrho
\let\To=\longrightarrow
\def\TTo#1{\mathop{\longrightarrow}\limits ^{#1}}
\def\longiso{\,\smash{\TTo{\lower 7pt\hbox{$\scriptstyle\sim$}}}\,}
\def\hom{^{\rm hom}}
\def\tfrac #1#2{{\textstyle\frac{#1}{#2}}}
\def\cocoa{\mbox{\rm
   C\kern-.13em o\kern-.07 em C\kern-.13em o\kern-.15em A}}
\begin{document}

\title{Deformations of Border Bases}

\author{Martin Kreuzer}
\address{Fakult\"at f\"ur Informatik und Mathematik, Universit\"at Passau,
D-94030 Passau, Germany}\email{kreuzer@uni-\,passau.de}

\author{Lorenzo Robbiano}
\address{Dipartimento di Matematica, Universit\`a di Genova, Via Dodecaneso 35,
I-16146 Genova, Italy}\email{robbiano@dima.unige.it}

\date{\today}
\keywords{Border basis, Gr\"obner basis, deformation,
Hilbert scheme}

\begin{abstract}
Border bases have recently attracted a lot of attention.
Here we study the problem of generalizing one of the main
tools of Gr\"obner basis theory, namely the flat deformation to
the leading term ideal, to the border basis setting.
After showing that the straightforward approach based on the
deformation to the degree form ideal works only under
additional hypotheses, we introduce border basis schemes
and universal border basis families. With their help the
problem can be rephrased as the search for a certain rational
curve on a border basis scheme. We construct the system of generators
of the vanishing ideal of the border basis scheme in different ways
and study the question of how to minimalize it.
For homogeneous ideals, we also introduce a homogeneous border basis
scheme and prove that it is an affine space in certain cases.
In these cases it is then easy to write down the desired
deformations explicitly.
\end{abstract}

\subjclass[2000]{Primary 13P10; secondary 14D20,13D10, 14D15}

\maketitle


\section{Introduction}

Let $I$ be a zero-dimensional ideal in a polynomial ring
$P=K[x_1,\dots,x_n]$ over a field~$K$, and let~$\mathcal{O}
=\{t_1,\dots,t_\mu\}$ be an order ideal, i.e.\ a finite set of
power products in~$P$ which is closed under taking divisors.
An {\it $\mathcal{O}$-border basis} of~$I$ is a set of polynomials
$G=\{g_1,\dots,g_\nu\}$ of the form $g_j=b_j -\sum_{i=1}^\mu c_{ji}t_i$,
where $\{b_1,\dots,b_\nu\}$ is the border $\partial\mathcal{O}=
(x_1\mathcal{O}\cup \cdots\cup x_n\mathcal{O})\setminus \mathcal{O}$
of~$\mathcal{O}$ and $c_{ji}\in K$, such that~$I$ is generated by~$G$
and $\mathcal{O}$ is a $K$-vector space basis of~$P/I$. In recent years
border bases have received considerable attention (see for instance~\cite{KK1},
\cite{KK2}, \cite{KKR}, \cite{M}, and~\cite{S}).
This is due to several reasons.

\begin{items}
\item[(1)] Border bases generalize Gr\"obner bases: if one takes for
$\mathcal{O}$ the complement of a leading term ideal of~$I$
with respect to some term ordering~$\sigma$, the corresponding border basis
contains the reduced $\sigma$-Gr\"obner basis of~$I$.

\item[(2)] Border bases are more suitable for dealing with computations
arising from real world problems. They are more stable with respect
to small variations in the coefficients of the polynomials generating~$I$
and permit symbolic computations with polynomial systems having
approximate coefficients (see for instance~\cite{AFT}, \cite{HKPP}, and~\cite{S}).

\item[(3)] Border bases are in general much more numerous than reduced Gr\"obner
bases. For instance, if the given ideal~$I$ is invariant under the action of a group of
symmetries, it is sometimes possible to find a border basis having these
symmetries, but not a Gr\"obner basis.
\end{items}

The starting point for this paper is our attempt to
generalize one of the fundamental results of Gr\"obner
basis theory to the border basis setting, namely the
fact that there exists a flat deformation from~$I$ to its
leading term ideal~$\LT_\sigma(I)$. More precisely, we are
looking at the following result. (Here and in the following
we use the notation introduced in~\cite{KR1} and~\cite{KR2}.)

Given a term ordering~$\sigma$, the ring~$P$ can be graded by a
row of positive integers $W=(w_1\;\cdots\;w_n)$, i.e.\ by
letting $\deg_W(x_i)=w_i$, such that the leading term ideal
$\LT_\sigma(I)$ equals the degree form ideal $\DF_W(I)$.
Using a homogenizing indeterminate $x_0$ and the grading of
$\overline{P}=K[x_0,\dots,x_n]$ given by $\overline{W}=
(1\;w_1\;\cdots\;w_n)$, the canonical $K$-algebra homomorphism
$\Phi: K[x_0] \;\To\; \overline{P}/I\hom$ satisfies

\begin{items}
\item[(1)] The ring $\overline{P}/I\hom$ is a free
$K[x_0]$-module.

\item[(2)] There are isomorphisms of $K$-algebras
$\overline{P}/(I\hom +(x_0)) \cong
P/\DF_W(I)$ and $\overline{P}/(I\hom +
(x_0-c)) \cong P/I$ for every $c\in K\setminus \{0\}$.
\end{items}

We express this by saying that there is a flat deformation
from~$I$ to~$\DF_W(I)$, and thus to~$\LT_\sigma(I)$.
In geometric jargon, we can say that, in the Hilbert scheme
parametrizing affine schemes of length $\dim_K(P/I)$,
the affine scheme defined by~$I$ is connected to the scheme defined
by~$\DF_W(I)$ via a rational curve parametrized by~$x_0$.
Thus the starting point for this paper is the question
whether there exists a flat deformation from a zero-dimensional
ideal~$I$ given by an $\mathcal{O}$-border basis
$G=\{g_1,\dots,g_\nu\}$ as above to its border term ideal
$\BT_\mathcal{O}=(b_1,\dots,b_\nu)$.

The direct approach taken in
Section~\ref{Deformation to the Border Form Ideal}
is to try to imitate Gr\"obner basis theory and to
use the flat deformation to the degree form ideal
we just recalled. Unfortunately, this approach does not
succeed in all cases, but only under the additional
assumption that~$\mathcal{O}$ has a $\maxdeg_W$ border,
i.e.\ that no term in~$\mathcal{O}$ has a larger degree than
a term in the border~$\partial\mathcal{O}$.

Therefore it is necessary to dig deeper into the problem
and find other ways of constructing the desired flat
deformations. In Section~\ref{The Border Basis Scheme}
we take a step back and view the task from a more global
perspective. All zero-dimensional ideals having an
$\mathcal{O}$-border basis can be parametrized by a
scheme~$\mathbb{B}_{\mathcal{O}}$ which we call the
{\it $\mathcal{O}$-border basis scheme}.
Using the condition that the generic multiplication matrices
have to commute, we give explicit equations
defining~$\mathbb{B}_{\mathcal{O}}$ in a suitable affine space
(see Definition~\ref{defBBS}).

A moduli space such as the border basis scheme usually comes
together with a universal family: this is a morphism
from~$\mathbb{B}_{\mathcal{O}}$ to another scheme whose fibers are
precisely the schemes defined by the ideals having an $\mathcal{O}$-border
basis. The fundamental result about this {\it universal border basis
family} is that it is flat. In fact, in Theorem~\ref{universal}
we give an elementary, explicit proof that
$\mathcal{O}$ is a basis for the entire family, viewed as a module over
the coordinate ring of the border basis scheme.
Hence the construction of the desired flat deformation of an
ideal to its border term ideal is equivalent to finding
suitable rational curves on the border basis scheme
(see Corollay~\ref{ratcurve}).

To examine the border basis scheme further, we have a more detailed
look at the system of generators of its vanishing ideal in
Section~\ref{Defining Equations for the Border Basis Scheme}.
The technique of lifting neighbor syzygies (introduced in~\cite{KK1}
and~\cite{S}, and independently in~\cite{Hu2}) provides us
with a different way of constructing  a system of generators
of~$I(\mathbb{B}_{\mathcal{O}})$ (see Proposition~\ref{altgenBBS}).
Using suitable examples, including the well-known Example~\ref{exa1xyz}
of a singularity on a Hilbert scheme, we disprove several
claims in~\cite{S} with respect to the possibility of
removing redundant generators from this system.
On~the positive side, in Proposition~\ref{removing} we provide a
criterion for eliminating some unnecessary generators.

The final Section~\ref{The Homogeneous Border Basis Scheme}
introduces the homogeneous border basis scheme
$\mathbb{B}_{\mathcal{O}}\hom$. It parametrizes
all homogeneous zero-dimensional ideals having an $\mathcal{O}$-border
basis and is obtained from the border basis scheme by intersecting it
with a suitable linear space.
Our main result about $\mathbb{B}_{\mathcal{O}}\hom$ is that it is
an affine space (and not only isomorphic to an affine space)
if~$\mathcal{O}$ has a $\maxdeg_W$ border (see Theorem~\ref{homcommute}).
This theorem is a nice tool which can be employed
to produce good deformations (see Example~\ref{exdefcontinued}) and
to recreate the construction of reducible Hilbert schemes
(see Example~\ref{exIarrobino}).

Here we close this introduction by pointing out that all computations were
done using the computer algebra system~\cocoa (see~\cite{CoCoA}) and that
even great artists can be too pessimistic at times.

\begin{flushright}
\small\it
Deformations simply do not exist.\\
\rm (Pablo Picasso)
\end{flushright}

\bigskip

%
%

\section{Deformation to the Border Form Ideal}
\label{Deformation to the Border Form Ideal}

One of the fundamental results of Gr\"obner basis theory is that there
exists a flat deformation of a polynomial ideal to its leading term ideal.
This deformation is achieved by taking a Gr\"obner basis of the ideal,
viewing it as a Macaulay basis with respect to a suitably chosen $\mathbb
N$-grading, homogenizing it, and letting the homogenizing indeterminate tend
to zero. An analogous fact for border bases of
zero-dimensional polynomial ideals is not known in general. In this section
we shall prove some partial results in this direction.

In the following we let $K$ be a field, $P=K[x_1,\dots,x_n]$ a polynomial
ring, and $I\subset P$ a zero-dimensional ideal. Recall that an {\em order
ideal}~$\mathcal{O}$ is a finite set of terms in $\mathbb T^n=\{
x_1^{\alpha_1}\cdots x_n^{\alpha_n} \mid \alpha_i\ge 0\}$ such that all
divisors of a term in~$\mathcal{O}$ are also contained in~$\mathcal{O}$. The set
$\partial\mathcal{O}=(x_1\mathcal{O} \cup\cdots\cup x_n\mathcal{O}) \setminus \mathcal{O}$
is called the {\em border} of~$\mathcal{O}$. By repeating this construction, we define
the {\it higher borders} $\partial^i\mathcal{O}$ for $i\ge 1$ and we let
$\partial^0\mathcal{O}=\mathcal{O}$. The number $\indO(t)=\min\{i\ge 0
\mid t\in \partial^i \mathcal{O}\}$ is called the {\em $\mathcal{O}$-index} of
a term $t\in\mathbb{T}^n$.

\begin{definition}
Let $\mathcal{O}=\{t_1,\dots,t_\mu\}$ be an order ideal and $\partial\mathcal{O}
=\{b_1,\dots,b_\nu\}$ its border.

\begin{items}
\item A set of polynomials $G=\{g_1,\dots,g_\nu\}\subseteq I$
is called an {\em $\mathcal{O}$-border prebasis} of~$I$ if it is of the form
$g_j=b_j-\sum_{i=1}^\mu a_{ij}t_i$ with $a_{ij}\in K$.

\item An $\mathcal{O}$-border prebasis of~$I$ is called an {\em $\mathcal{O}$-border
basis} of~$I$ if $P=I\oplus \langle \mathcal{O}\rangle_K$.

\item For a polynomial $f=c_1 u_1+\cdots+c_s u_s\ne 0$ with $c_i\in K\setminus \{0\}$
and $u_i\in\mathbb T^n$, the polynomial $\BF_{\mathcal{O}}(f)=\sum_{\{i\mid \indO(u_i)\,
\hbox{\scriptsize max.}\}}c_i u_i$ is called the {\em border form} of~$f$.

\item The ideal $\BF_{\mathcal{O}}(I)= ( \BF_{\mathcal{O}}(f)\mid f \in
I\setminus \{0\} )$ is called the {\em border form ideal} of~$I$.

\item The monomial ideal generated by~$\partial\mathcal{O}$ is called the {\em border
term ideal}\/ of~$\mathcal{O}$ and is denoted by~$\BT_{\mathcal{O}}$.

\end{items}
\end{definition}

Notice that
if~$I$ has an $\mathcal{O}$-border basis, its border from ideal is
$\BF_{\mathcal{O}}(I)=\BT_{\mathcal{O}}$.
Thus our goal is to use a
border basis of~$I$ to deform the ideal to its border form ideal. If the
order ideal is of the form $\mathcal{O}_\sigma(I)=\mathbb T^n\setminus
\LT_\sigma(I)$ for some term ordering~$\sigma$, the Gr\"obner deformation
can be used as follows.

\begin{proposition}\label{TOdeform}
Let $\sigma$ be a term ordering, let $G=\{g_1,\dots,g_\nu\}$ be the
$\mathcal{O}_\sigma(I)$-border basis of~$I$, and let $b_i$ the border term
in the support of~$g_i$ for $i=1,\dots,\nu$.

\begin{items}
\item There exist weights $W=(w_1,\dots,w_n)\in (\mathbb N_+)^n$
such that $b_j=\DF_W(g_j)$ and~$G$ is a Macaulay basis of~$I$ with respect
to the grading given by~$W$.

\item Let $\overline{P}=K[x_0,\dots,x_n]$ be graded by
$\overline{W}=(1,w_1,\dots,w_n)$. Then the ring $\overline{P}/I\hom=
\overline{P}/ ( g_1\hom,\dots, g_\nu\hom )$ is a graded free
$K[x_0]$-module.

\end{items}

\noindent In particular, we have a {\em flat family} $K[x_0] \To \overline{P}/I\hom$
whose {\em general fiber} is isomorphic to $P/I\cong
\overline{P}/(I\hom+( x_0-1))$,
where $I= ( g_1,\dots,g_\nu)$,
and whose {\em special fiber} is isomorphic to~$P/\BT_{\mathcal{O}_\sigma(I)} \cong
\overline{P}/(I\hom+( x_0))$.
\end{proposition}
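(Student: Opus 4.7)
The plan is to use two classical bridges: the representability of a term ordering by a positive weight vector on a finite set of terms, and the Macaulay-basis-to-flat-family machinery for positively graded polynomial rings; both are found in~\cite{KR1} and~\cite{KR2}.

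For part (a), the polynomials $g_1, \dots, g_\nu$ jointly have a finite support, so only finitely many strict $\sigma$-inequalities $t <_\sigma b_j$ (for $t \in \Supp(g_j)\setminus\{b_j\}$) must be controlled. By the standard representation of a term ordering on a finite set of terms by positive integer weights (Robbiano; cf.~\cite[Prop.~1.4.19]{KR1}), I would pick $W=(w_1,\dots,w_n)\in(\mathbb{N}_+)^n$ satisfying $\deg_W(t)<\deg_W(b_j)$ for every such pair; this forces $\DF_W(g_j)=b_j$, and in particular $(b_1,\dots,b_\nu)\subseteq \DF_W(I)$. For the reverse inclusion I would argue by a Hilbert-dimension count rather than element by element. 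The associated-graded identification $\mathrm{gr}_W(P/I)\cong P/\DF_W(I)$, applied to the finite-dimensional $P/I$, gives $\dim_K P/\DF_W(I)=\dim_K P/I=\mu$. On the other hand $\mathcal{O}=\mathcal{O}_\sigma(I)$ is a $K$-basis of both $P/I$ and $P/\LT_\sigma(I)=P/(b_1,\dots,b_\nu)$, so $\dim_K P/(b_1,\dots,b_\nu)=\mu$. The surjection $P/(b_1,\dots,b_\nu)\twoheadrightarrow P/\DF_W(I)$ between equidimensional $K$-spaces must then be an isomorphism, yielding $\DF_W(I)=(b_1,\dots,b_\nu)=(\DF_W(g_1),\dots,\DF_W(g_\nu))$, i.e.\ $G$ is a Macaulay basis with respect to $W$.

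For part (b), the $W$-grading of $P$ extends to the $\overline{W}$-grading of $\overline{P}$, and each $g_j\hom$ becomes homogeneous. Invoking the standard Macaulay-basis-to-homogenization theorem (cf.~\cite[Sect.~4.3]{KR2}), the Macaulay property from~(a) gives both $I\hom=(g_1\hom,\dots,g_\nu\hom)$ and injectivity of multiplication by $x_0$ on the graded quotient $\overline{P}/I\hom$. Since $\overline{P}/I\hom$ is a finitely generated graded $K[x_0]$-module (the $\mathcal{O}$-residues serve as a generating set) that is torsion-free over the PID $K[x_0]$, it is graded free, hence flat. For the closing assertion, flatness is immediate, and the fibers drop out by evaluating $x_0$: the substitution $x_0\mapsto 1$ dehomogenizes each $g_j\hom$ back to $g_j$, recovering $P/I$; the substitution $x_0\mapsto 0$ replaces each $g_j\hom$ with its degree form $\DF_W(g_j)=b_j$, giving $P/(b_1,\dots,b_\nu)=P/\BT_{\mathcal{O}_\sigma(I)}$.

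The main obstacle is the verification that $G$ is genuinely a Macaulay basis with respect to the chosen $W$. Checking this element by element would require transferring Gr\"obner-basis reductions into $W$-degree reductions, which is delicate because a single $\sigma$-leading term may be accompanied by other terms of equal $W$-degree in $\DF_W(f)$. The Hilbert-dimension trick bypasses this: knowing a priori that $P/I$ and $P/\BT_{\mathcal{O}_\sigma(I)}$ share the $K$-dimension $\mu$ (each having $\mathcal{O}$ as a basis) forces the containment $(b_1,\dots,b_\nu)\subseteq \DF_W(I)$ to be an equality. Once (a) is settled, (b) and the flat family statement reduce to routine applications of standard homogenization theory.
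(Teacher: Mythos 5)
Your proposal is correct. The paper's proof consists almost entirely of citations: Eisenbud's Prop.~15.16 (the same fact you invoke via the Robbiano representation theorem, \cite{KR1} Prop.~1.4.19) for the existence of~$W$ with $\DF_W(g_j)=b_j$; then \cite{KR2}, Props.~6.4.18 and~4.2.15 for the Macaulay-basis claim, which amounts to observing that the $\mathcal{O}_\sigma(I)$-border basis is in particular a $\sigma$-Gr\"obner basis and that a Gr\"obner basis with respect to a term ordering compatible with the $W$-grading is automatically a $W$-Macaulay basis; and finally \cite{KR2}, Thm.~4.3.22 and Prop.~4.3.23 for part~(b) and the fiber descriptions. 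You diverge exactly at the Macaulay-basis step: rather than routing through ``Gr\"obner basis implies Macaulay basis,'' you prove the equality $\DF_W(I)=(b_1,\dots,b_\nu)$ directly by a Hilbert-dimension count, using $\dim_K P/\DF_W(I)=\dim_K P/I=\mu=\dim_K P/\LT_\sigma(I)$ together with the containment $(b_1,\dots,b_\nu)\subseteq\DF_W(I)$. This is a legitimate, self-contained alternative, and it buys a small amount of extra flexibility: it only requires $W$ to order each $\Supp(g_j)$ correctly, not that $\sigma$ be globally degree-compatible with~$W$. For part~(b) your argument (finitely generated, torsion-free, graded module over the PID $K[x_0]$ is graded free) is in the same spirit as the cited results; the one place to expand slightly is the parenthetical that the $\mathcal{O}$-residues generate $\overline{P}/I\hom$ over $K[x_0]$, which follows from graded Nakayama applied to the special fiber $\overline{P}/(I\hom+(x_0))\cong P/\DF_W(I)$, where $\mathcal{O}$ is a $K$-basis.
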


\begin{proof}
The first claim in~a) follows from~\cite{E}, Prop.\ 15.16. The second claim
in~a) is then a consequence of~\cite{KR2}, Props.\ 6.4.18 and~4.2.15.
The remaining claims follow from~a) and~\cite{KR2}, Thm.\ 4.3.22 and Prop.\ 4.3.23.
\end{proof}

For more general order ideals~$\mathcal{O}$, i.e.\ for order ideals
which are not necessarily of the form $\mathcal{O}_\sigma(I)$, one strategy is
to deform a given $\mathcal{O}$-border basis of~$I$ first to a border basis
of the degree form ideal~$\DF_W(I)$ of~$I$ with respect to a suitably chosen grading.
A border basis of~$\DF_W(I)$ is always homogeneous, as the following lemma shows.

\begin{lemma}
Let $P$ be graded by a matrix $W\in\Mat_{m,n}(\mathbb{Z})$, let~$\mathcal{O}$
be an order ideal, and let $I\subset P$ be a homogeneous ideal which has
an $\mathcal{O}$-border basis. Then this $\mathcal{O}$-border basis of~$I$
consists of homogeneous polynomials.
\end{lemma}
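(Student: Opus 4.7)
The plan is to argue directly using the direct-sum decomposition $P=I\oplus\langle\mathcal{O}\rangle_K$ together with the fact that $I$ is $W$-homogeneous. Fix one generator $g_j=b_j-\sum_{i=1}^{\mu}a_{ij}t_i$ of the border basis, and write $g_j=\sum_{\gamma}(g_j)_\gamma$ as the sum of its $W$-homogeneous components, where $\gamma$ ranges over $\mathbb{Z}^m$. The goal is to show that $(g_j)_\gamma=0$ unless $\gamma=\deg_W(b_j)$.

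The key observation is that $b_j$ is a single term, so it contributes only to the component of degree $\delta_j:=\deg_W(b_j)$. Hence for every $\gamma\neq\delta_j$ the component $(g_j)_\gamma$ is a $K$-linear combination of certain terms from $\mathcal{O}$, namely those $t_i$ with $\deg_W(t_i)=\gamma$. In particular $(g_j)_\gamma\in\langle\mathcal{O}\rangle_K$.

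On the other hand, since $I$ is a $W$-homogeneous ideal and $g_j\in I$, each homogeneous component $(g_j)_\gamma$ lies in $I$. Combining these two statements, $(g_j)_\gamma\in I\cap\langle\mathcal{O}\rangle_K$ for every $\gamma\neq\delta_j$. But the border basis property gives $P=I\oplus\langle\mathcal{O}\rangle_K$, so $I\cap\langle\mathcal{O}\rangle_K=\{0\}$, and hence $(g_j)_\gamma=0$ for every $\gamma\neq\delta_j$. Therefore $g_j=(g_j)_{\delta_j}$ is $W$-homogeneous of degree $\delta_j$, and this holds for every $j\in\{1,\dots,\nu\}$.

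There is no real obstacle here; the only point to be careful about is that the grading takes values in $\mathbb{Z}^m$ rather than $\mathbb{Z}$, so one has to phrase the decomposition and the "other degrees" argument in terms of multi-indices $\gamma\in\mathbb{Z}^m$. Equivalently, one may remark that the coefficient $a_{ij}$ must vanish whenever $\deg_W(t_i)\neq\deg_W(b_j)$, which then immediately yields the $W$-homogeneity of each $g_j$.
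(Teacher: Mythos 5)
Your proof is correct and follows essentially the same route as the paper's: both split $g_j$ into $W$-homogeneous components, use homogeneity of $I$ to place each component in $I$, and then kill the components of degree $\neq\deg_W(b_j)$. The only cosmetic difference is that the paper phrases the final step as an appeal to the uniqueness of the $\mathcal{O}$-border basis (citing \cite{KR2}, 6.4.17), whereas you argue directly from $I\cap\langle\mathcal{O}\rangle_K=\{0\}$, which is exactly the fact that uniqueness result rests on.
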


\begin{proof}
Let $\mathcal{O}=\{t_1,\dots,t_\mu\}$, let $b_j\in \partial\mathcal{O}$,
and let $g_j=b_j -\sum_{i=1}^\mu c_{ij}t_i$ be the corresponding border basis
element, where $c_{ij}\in K$. If we restrict the sum to those indices~$i$ for
which $\deg_W(t_i)=\deg_W(b_j)$, we obtain a homogeneous element of~$I$ of the form
$\tilde g_i=b_j -\sum_k c_{ik}t_k$. Now the uniqueness of the $\mathcal{O}$-border
basis of~$I$ (cf.~\cite{KR2}, 6.4.17) implies $g_i=\tilde g_i$.
\end{proof}

As for our idea to deform a border basis of~$I$ to a homogeneous border basis
of~$\DF_W(I)$, we have the following result.

\begin{theorem}{\bf (Deformation to the Degree Form Ideal)}\label{DFdeform}\\
Let $W=(w_1,\dots,w_n)\in\Mat_{1,n}(\mathbb N_+)$ be a row of positive integers,
let~$P$ be graded by~$W$, and let $I\subset P$ be a zero-dimensional ideal.
Then the following conditions are equivalent.

\begin{items}
\item The ideal~$I$ has an $\mathcal{O}$-border basis, say
$G=\{g_1,\dots,g_\nu\}$, and we have $b_j \in \Supp(\DF_W(g_j))$
for $j=1,\dots,\nu$.

\item The degree form ideal $\DF_W(I)$ has an $\mathcal{O}$-border
basis.
\end{items}

If these conditions are satisfied, the $\mathcal{O}$-border basis of $\DF_W(I)$ is
$\DF_W(G)=\{\DF_W(g_1),\dots,\DF_W(g_s)\}$ and
there is a flat family $K[x_0] \To \overline{P}/I\hom$ whose
general fiber is isomorphic to~$P/I$, where $I= (g_1,\dots,g_\nu)$,
and whose special fiber is isomorphic to $P/\DF_W(I)$, where $\DF_W(I)=
(\DF_W(g_1),\dots,\DF_W(g_\nu))$.
\end{theorem}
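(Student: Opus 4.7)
The plan is to use the standard dimension/filtration machinery for the $W$-grading to establish the equivalence, and then reduce the flat-family statement to the same Macaulay-basis homogenization that already powers Proposition~\ref{TOdeform}.

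For (a)$\Rightarrow$(b), I will apply $\DF_W$ to each $g_j$. The hypothesis $b_j\in\Supp(\DF_W(g_j))$ forces each $\DF_W(g_j)$ to have the form $b_j-\sum_{i:\,\deg_W(t_i)=\deg_W(b_j)}c_{ij}t_i$, so (after padding with zero coefficients) $\DF_W(G):=\{\DF_W(g_1),\dots,\DF_W(g_\nu)\}$ is an $\mathcal{O}$-border prebasis contained in $\DF_W(I)$. Since an order ideal always spans the quotient modulo one of its border prebases, this gives
\[
\mu \;=\; \dim_K(P/I) \;=\; \dim_K(P/\DF_W(I)) \;\le\; \dim_K(P/(\DF_W(G))) \;\le\; \mu,
\]
using the standard Hilbert-function identity between $P/I$ and its $W$-associated graded $P/\DF_W(I)$. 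All dimensions coincide, $(\DF_W(G))=\DF_W(I)$, and the set $\DF_W(G)$ is the (necessarily unique) $\mathcal{O}$-border basis of $\DF_W(I)$.

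For (b)$\Rightarrow$(a), I will read the same identification backwards. The filtration $F_d P=\bigoplus_{e\le d}P_e$ of~$P$ by $W$-degree induces a filtration on $P/I$ whose associated graded is canonically $P/\DF_W(I)$. Since the border basis of $\DF_W(I)$ displays $\mathcal{O}$ as a homogeneous $K$-basis of $P/\DF_W(I)$, an induction on~$d$ shows that $\{t\in\mathcal{O}\mid\deg_W(t)\le d\}$ is a $K$-basis of the filtered piece $F_d P/(F_d P\cap I)$. In particular $\mathcal{O}$ is a $K$-basis of $P/I$, and for each border term $b_j$ its class in $P/I$ already lies in the filtered piece of degree $d_j:=\deg_W(b_j)$, which produces a lift
\[
g_j \;=\; b_j \;-\; \sum_{i:\,\deg_W(t_i)\le d_j} a_{ij}t_i \;\in\; I.
\]
Then $G=\{g_1,\dots,g_\nu\}$ is the $\mathcal{O}$-border prebasis, hence the $\mathcal{O}$-border basis, of $I$, and the $W$-degree restriction on the $t_i$'s that actually appear guarantees $b_j\in\Supp(\DF_W(g_j))$.

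Once the equivalence is in hand, $\DF_W(G)$ generates $\DF_W(I)$ in either direction, so $G$ is a Macaulay basis of~$I$ with respect to~$W$. The flat family $K[x_0]\To\overline{P}/I\hom$ with the stated general and special fibers then follows by homogenizing~$G$ and applying \cite{KR2}, Thm.~4.3.22 and Prop.~4.3.23, exactly as in the proof of Proposition~\ref{TOdeform}. The main subtle point will be the filtered-degree control in (b)$\Rightarrow$(a): without the bound $\deg_W(t_i)\le d_j$ on the lift of~$b_j$, the candidate $g_j$ could acquire a tail involving terms of strictly higher $W$-degree, so that $\DF_W(g_j)$ would no longer contain~$b_j$ and the entire equivalence would collapse.
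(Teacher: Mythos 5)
Your argument for (a)$\Rightarrow$(b) is essentially identical to the paper's: form $\DF_W(G)$, observe it is an $\mathcal{O}$-border prebasis of the ideal it generates, invoke the Border Division Algorithm to get generation, and close the dimension chain using $\dim_K(P/I)=\dim_K(P/\DF_W(I))$. Likewise, your treatment of the flat-family statement via Macaulay bases and homogenization mirrors the paper's appeal to \cite{KR2}, 4.3.19, 4.3.22 and 4.3.23.

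The genuine divergence is in (b)$\Rightarrow$(a). The paper introduces an auxiliary term ordering $\sigma$ compatible with $W$, takes the $\mathcal{O}_\sigma(I)$-border (Gr\"obner) basis $H$, passes to the homogenization $\overline{P}/I\hom$ as a graded free $K[x_0]$-module with basis $\overline{\mathcal{O}_\sigma(I)}$, and then shows $\overline{\mathcal{O}}$ is also a basis by analyzing a homogeneous change-of-basis matrix $\mathcal{A}$: because the degree tuples of $\mathcal{O}$ and $\mathcal{O}_\sigma(I)$ agree, $\mathcal{A}$ is block upper-triangular with constant diagonal blocks, so $\det\mathcal{A}\in K\setminus\{0\}$. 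You bypass all of this by working directly with the increasing $W$-degree filtration on $P/I$ and the canonical identification of its associated graded with $P/\DF_W(I)$; the induction on filtered pieces immediately gives that $\{t\in\mathcal{O}\mid\deg_W(t)\le d\}$ is a basis of $F_dP/(F_dP\cap I)$, from which both the existence of the $\mathcal{O}$-border basis of $I$ and the degree bound $\deg_W(t_i)\le\deg_W(b_j)$ on the lift $g_j$ (hence $b_j\in\Supp(\DF_W(g_j))$) fall out at once. Your route is more elementary in that it needs neither an auxiliary term ordering nor the graded-free-module/transition-matrix machinery; the paper's route, by contrast, produces the explicit homogeneous matrix $\mathcal{A}$ and fits more snugly into the homogenization framework of \cite{KR2} that it reuses for the flatness conclusion. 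Both are correct; your filtered argument is a clean alternative, and you correctly flag the degree control on the $t_i$'s appearing in $g_j$ as the crux of the direction.
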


\begin{proof}
First we show that a) implies b). Since $G$ is an $\mathcal{O}$-border basis of~$I$
and since $b_j \in \Supp(\LF_W(g_j))$ for $j=1,\dots,\nu$, the set $\DF_W(G)=
\{\DF_W(g_1),\dots,\DF_W(g_\nu)\}$ is an $\mathcal{O}$-border prebasis
of the ideal $J=(\DF_W(g_1),\dots,\DF_W(g_\nu))$.
By the Border Division Algorithm (see \cite{KR2}, Prop.~6.4.11),
the residue classes of the elements of~$\mathcal{O}$
generate the $K$-vector space $P/J$. Together with $J\subseteq \DF_W(I)$,
this shows
$$
\#\mathcal{O}=\dim_K(P/I)= \dim_K(P/\DF_W(I)) \le \dim_K(P/J) \le \#\mathcal{O}.
$$
Therefore we get $J=\DF_W(I)$ and the residue classes of the elements of~$\mathcal{O}$
are a $K$-basis of~$P/\DF_W(I)$. From this the claim follows immediately.

Now we prove that b) implies a).
Let $\sigma$ be a term ordering on~$\mathbb T^n$ which is compatible
with the grading defined by~$W$, and let $H=\{h_1,\dots,h_\nu\}$
be the $\mathcal{O}_\sigma(I)$-border basis of~$I$.
For the purposes of this proof, we may consider~$\mathcal{O}$
and~$\mathcal{O}_\sigma(I)$ as deg-ordered tuples (see~\cite{KR2}, 4.5.4).

The fact the~$H$ is a $\sigma$-Gr\"obner basis of~$I$
implies by~\cite{KR2}, 4.2.15 that~$H$ is a Macaulay basis of~$I$
with respect to the grading given by~$W$. Then~\cite{KR2}, 4.3.19
shows that $I\hom$ is generated by~$\{h_1\hom,\dots, h_\nu\hom\}$,
and by~\cite{KR2}, 4.3.22 the ring $\overline{P}/I\hom$
is a graded free $K[x_0]$-module, where $K[x_0]$ is graded by $\deg(x_0)=1$
and $\overline{P}=K[x_0,\dots,x_n]$ is graded by
$\overline{W}=(1,w_1,\dots,w_n)$. More precisely, the proof of~\cite{KR2},
4.3.22 shows that the residue classes $\overline{\mathcal{O}_\sigma(I)}$
form a homogeneous $K[x_0]$-basis of this graded free module.
Since the residue classes $\overline{\mathcal{O}}$ are homogeneous elements
in $\overline{P}/I\hom$, we can write
$\overline{\mathcal{O}} = \mathcal{A} \cdot \overline{\mathcal{O}_\sigma(I)}$
with a homogeneous matrix $\mathcal{A}\in \Mat_{\nu}(K[x_0])$
(see~\cite{KR2}, 4.7.1 and~4.7.3).

By the hypothesis, $\DF_W(I)$ has an $\mathcal{O}$-border basis. Thus the
residue classes of the elements of~$\mathcal{O}$ are a homogeneous $K$-basis of
$P/\DF_W(I)$. Since also the residue classes of the elements of $\mathcal{O}_\sigma(I)$
are a homogeneous $K$-basis of this ring, the degree tuples
of~$\mathcal{O}$ and of~$\mathcal{O}_\sigma(I)$ are identical.
Therefore the matrix~$\mathcal{A}$ is a block matrix of the form
$$
\mathcal{A}=\begin{pmatrix}
\mathcal{A}_{11} & \mathcal{A}_{12} & \cdots & \mathcal{A}_{1q}\\
0 & \ddots & \ddots & \vdots \\
\vdots & \ddots & \ddots & \mathcal{A}_{q{-}1\,q} \\
0 & \cdots & 0 & \mathcal{A}_{qq}
\end{pmatrix}
$$
with square matrices $\mathcal{A}_{ii}$ having constant entries.
Hence we have $\det(\mathcal{A})\in K$, and the fact that the transformation
matrix $\mathcal{A}\vert_{x_0\mapsto 0}$ between the two homogeneous bases
of $P/\DF_W(I)$ is invertible implies $\det(\mathcal{A})\ne 0$.
Altogether, it follows that~$\overline{\mathcal{O}}$ is a
homogeneous $K[x_0]$-basis of~$\overline{P}/I\hom$, too. In particular,
the residue classes of~$\mathcal{O}$ form a $K$-basis of
$P/I\cong \overline{P}/(I\hom +(x_0-1))$, i.e.\ the ideal~$I$
has an $\mathcal{O}$-border basis.

For every $j\in\{1,\dots,\nu\}$, we have a representation
$b_j=\sum_{i=1}^\mu f_{ij} t_i + h_j$
with homogeneous polynomials $f_{ij}\in K[x_0]$ of degree $\deg_W(b_j)-\deg_W(t_i)$
and with a homogeneous polynomial $h_j\in I\hom$ of degree $\deg_W(b_j)$.
Setting $x_0\mapsto 1$ in this representation, we find
$g_j=b_j -\sum_{i=1}^\mu f_{ij}(1)\, t_i \in I$. It follows that these polynomials
form the $\mathcal{O}$-border basis of~$I$. By construction, we have
$b_j\in\Supp(\DF_W(g_j))$.

The first additional claim is a consequence of the observation that $\DF_W(G)$ is an
$\mathcal{O}$-border prebasis of~$\DF_W(I)$ and of~\cite{KR2}, Prop.~6.4.17.
To construct the desired flat family, we use the fact that~$G$ is a
Macaulay basis of~$I$ by what we have just shown
and conclude from~\cite{KR2}, 4.3.19 that $I\hom=(g_1\hom,\dots,
g_\nu\hom)$. From this the claim follows.
\end{proof}

Let us look at an example for this proposition.

\begin{example}
Consider the ideal $I=(-2x^2+xy-y^2-1,\, 8y^3+10x+9y)$
in the polynomial ring $P=\mathbb{Q}[x,y]$. The degree form ideal of~$I$
with respect to the standard grading, i.e.\ the grading defined by $W=(1\;1)$,
is $\DF_W(I)=(-2x^2 +xy-y^2,\, y^3)$. We want to use the order
ideal $\mathcal{O}=\{1,x,x^2,x^3,y,y^2\}$ whose border is given by
$\partial\mathcal{O} = \{xy, y^3,xy^2,x^2y,x^3y,x^4\}$.

\medskip
\makebox[11 true cm]{
\beginpicture
\setcoordinatesystem units <0.4cm,0.4cm>
\setplotarea x from 0 to 5, y from 0 to 4.1
\axis left /
\axis bottom /

\arrow <2mm> [.2,.67] from  4.5 0  to 5 0
\arrow <2mm> [.2,.67] from  0 3.6  to 0 4.1

\put {$\scriptstyle x^i$} [lt] <0.5mm,0.8mm> at 5.1 0
\put {$\scriptstyle y^j$} [rb] <1.7mm,0.7mm> at 0 4.1
\put {$\bullet$} at 0 0
\put {$\bullet$} at 1 0
\put {$\bullet$} at 0 1
\put {$\bullet$} at 2 0
\put {$\bullet$} at 0 2
\put {$\bullet$} at 3 0
\put {$\scriptstyle 1$} [lt] <-0.6mm,-1mm> at 0 0
\put {$\circ$} at 0 3
\put {$\circ$} at 1 2
\put {$\circ$} at 1 1
\put {$\circ$} at 2 1
\put {$\circ$} at 3 1
\put {$\circ$} at 4 0
\endpicture}
\medskip

It is easy to check that $\DF_W(I)$ has an $\mathcal{O}$-border
basis, namely $H=\{h_1,\dots,h_6\}$ with $h_1=xy-2x^2-y^2$, $h_2=y^3$,
$h_3=xy^2+4x^3$, $h_4=x^2y+2x^3$, $h_5=x^3y$, and $h_6=x^4$.
Therefore the proposition says that~$I$ has an $\mathcal{O}$-border basis
$G=\{g_1,\dots,g_6\}$, and that $h_i=\DF_W(g_i)$ for $i=1,\dots,6$.
Indeed, if we compute this border basis we find that it is
given by $g_1=xy-2x^2-y^2-1$, $g_2=y^3+\frac{5}{4}x+\frac{9}{8}y$,
$g_3=xy^2+4x^3+\frac{3}{4}x -\frac{1}{8}y$, $g_4=x^2y+2x^3 -\frac{1}{4}x
-\frac{1}{8}y$, $g_5=x^3y-\frac{1}{2}x^2-\frac{1}{8}y^2-\frac{3}{32}$,
and $g_6=x^4-\frac{1}{64}$.
\end{example}

An easy modification of this example shows that the converse implication
is not true without the hypothesis $b_j \in \Supp(\DF_W(g_j))$,
i.e.\ that an $\mathcal{O}$-border basis of~$I$ does not necessarily deform
to an $\mathcal{O}$-border basis of~$\DF_W(I)$.

\begin{example}\label{noDFdeform}
Consider the ideal $I=(x^2y,\, x^3 - \frac{1}{2}xy,\, xy^2,\, y^3)$
in $P=\mathbb{Q}[x,y]$. With respect to the standard
grading, we have $\DF_W(I)=( x^3,x^2y,xy^2,y^3)$. The ideal $\DF_W(I)$
does not have an $\mathcal{O}$-border basis for $\mathcal{O}=
\{1,x,x^2,x^3,y,y^2\}$. However, the ideal~$I$ has the $\mathcal{O}$-border
basis $G=\{g_1,\dots,g_6\}$, where $g_1=xy-2x^3$, $g_2=y^3$, $g_3=xy^2$, $g_4=x^2y$,
$g_5=x^3y$, and $g_6=x^4$.
\end{example}

The main reason why the last example exists
is that one of the terms in~$\mathcal{O}$ has a larger degree than
the term~$xy$ in the border of~$\mathcal{O}$. This suggests
the following notion.

\begin{definition}\label{DefMaxdeg}
Let~$P$ be graded by a matrix $W\in\Mat_{1,n}(\mathbb{N}_+)$.
The order ideal~$\mathcal{O}$ is said to have a {\it $maxdeg_W\!\!$ border}
if $\deg_W(b_j)\ge \deg_W(t_i)$ for $i=1,\dots,\mu$ and $j=1,\dots,\nu$.
In other words, no term in~$\mathcal{O}$ is allowed to have a degree
larger than any term in the border.
\end{definition}

Note that this condition is violated in Example~\ref{noDFdeform}.
By choosing suitable weights, many order ideals can be seen
to have a $\maxdeg_W$ border.

\begin{example}
Let $a\ge 1$, and let~$\mathcal{O}=\{1,x_1,x_1^2,\dots,x_1^a\}
\subset \mathbb{T}^n$. Then~$\mathcal{O}$ has a $\maxdeg_W$
border with respect to the grading given by $W=(1\;a\;\cdots\;a)$.
\end{example}

One consequence of an order ideal having a $\maxdeg_W$ border
is that $b_j \in \Supp(\LF_W(g_j))$ for $j=1,\dots,\nu$ and
every $\mathcal{O}$-border prebasis $G=\{g_1,\dots,g_\nu\}$.
Thus the proposition applies in particular
to order ideals having a $\maxdeg_W$ border.
Let us end this section with an example for this part of the
proposition.

\begin{example}\label{deftoDFex}
Let $\mathcal{O}=\{1,x,x^2,y,y^2\} \subset \mathbb T^2$.
Then we have  $\mathcal{O}=\mathbb{T}^2_{\le 2}\setminus \{ xy\}$, i.e.\
the order ideal~$\mathcal{O}$ has a $\maxdeg_W$ border with respect
to the standard grading.

Consider the ideal $I=(x^2+xy -\frac{1}{2}y^2-x-\frac{1}{2}y,\, y^3-y,\,
xy^2-xy)$ which is the vanishing ideal of the point set
$\mathbb X=\{(0,0),\, (0,-1),\, (1,0),\, (1,1),\, (-1,1)\}$ if ${\rm char}(K)\ne 2$.
We have $\partial\mathcal{O}= \{b_1,b_2,b_3,b_4,b_5\}$ with $b_1=x^3$, $b_2=x^2y$,
$b_3=xy$, $b_4=xy^2$, and $b_5=y^3$. The ideal~$I$ has an $\mathcal{O}$-border
basis, namely $G=\{g_1,g_2,g_3,g_4,g_5\}$ with $g_1=x^3-x$,
$g_2=x^2y-\frac{1}{2}y^2-\frac{1}{2}y$,
$g_3=xy+x^2-\frac{1}{2}y^2-x-\frac{1}{2}y$,
$g_4=xy^2+x^2-\frac{1}{2}y^2-x-\frac{1}{2}y$, and $g_5=y^3-y$.

The order ideal~$\mathcal{O}$ is not of the form $\mathcal{O}=\mathcal{O}_\sigma(I)$
for any term ordering~$\sigma$.
Using the proposition, we deform the border basis elements in~$G$ to their degree forms.
Thus the ideal $\DF_W(I)=(x^3,\, x^2y,\, xy+x^2-\frac{1}{2}y^2,\, xy^2,\, y^3)$
is a flat deformation of~$I$ and these five polynomials are an $\mathcal{O}$-border
basis of $\DF_W(I)$. The task of deforming the homogeneous ideal $\DF_W(I)$ further to the
border term ideal $\BT_{\mathcal{O}}=(x^3,\, x^2y,\, xy,\, xy^2,\, y^3)$
will be considered in Example~\ref{exdefcontinued}.
\end{example}

%
%

\section{The Border Basis Scheme}
\label{The Border Basis Scheme}

Let $\mathcal{O}=\{t_1,\dots,t_\mu\}$ be an order ideal in~$\mathbb T^n$, and let
$\partial\mathcal{O}=\{b_1,\dots,b_\nu\}$ be its border. In this section we define
a moduli space for {\it all}\/ zero-dimensional ideals having an $\mathcal{O}$-border
basis, and we use rational curves on this scheme to construct flat
deformations of border bases.

\begin{definition}\label{defBBS}
Let $\{c_{ij} \mid 1\le i\le \mu,\;
1\le j\le\nu\}$ be a set of further indeterminates.

\begin{items}
\item The {\em generic $\mathcal{O}$-border prebasis}
is the set of polynomials
$G=\{g_1,\dots,g_\nu\}$ in~$K[x_1,\dots,x_n,c_{11},\dots,c_{\mu\nu}]$
given by
$$
g_j = b_j -\sum_{i=1}^\mu c_{ij}t_i
$$

\item For $k=1,\dots,n$, let $\mathcal{A}_k \in\Mat_{\mu}(K[c_{ij}])$ be the
$k^{\rm th}$ formal multiplication matrix associated to~$G$ (cf.~\cite{KR2},
Def.\ 6.4.29). It is also called the $k^{\rm th}$ {\em generic multiplication matrix}\/
with respect to~$\mathcal{O}$.

\item The affine scheme $\mathbb{B}_{\mathcal{O}} \subseteq \mathbb{A}^{\mu\nu}$
defined by the ideal $I(\mathbb{B}_{\mathcal{O}})$ generated by the entries of the matrices
$\mathcal{A}_k \mathcal{A}_\ell -\mathcal{A}_\ell \mathcal{A}_k$ with
$1\le k<\ell\le n$ is called the {\em $\mathcal{O}$-border basis scheme}.

\item The coordinate ring $K[c_{11},\dots,c_{\mu\nu}]/I(\mathbb{B}_\mathcal{O})$
of the scheme $\mathbb{B}_{\mathcal{O}}$
will be denoted by~$B_{\mathcal{O}}$.

\end{items}
\end{definition}

By~\cite{KR2}, Thm.\ 6.4.30, a point $(\alpha_{ij})\in K^{\mu\nu}$ yields a
border basis $\sigma(G)$ when we apply the substitution $\sigma(c_{ij})=\alpha_{ij}$
to~$G$ if and only if $\sigma(\mathcal{A}_k)\,\sigma(\mathcal{A}_\ell)=
\sigma(\mathcal{A}_\ell)\,\sigma(\mathcal{A}_k)$
for $1\le k<\ell\le n$. Therefore the $K$-rational points of~$\mathbb
B_{\mathcal{O}}$ are in 1--1 correspondence with the $\mathcal{O}$-border bases of
zero-dimensional ideals in~$P$, and thus with all zero-dimensional ideals
having an $\mathcal{O}$-border basis.

\begin{remark}{\bf (Properties of Border Basis Schemes)}%
\label{BBSprops}\\
Currently, not much seems to be known about
border basis schemes. For instance, it is not clear which of them are
connected, reduced or irreducible. Here we collect some basic
observations.

\begin{items}
\item By definition, the ideal $I(\mathbb{B}_{\mathcal{O}})$ is generated
by polynomials of degree two.

\item The scheme $\mathbb{B}_{\mathcal{O}}$ can be embedded as an open
affine subscheme of the Hilbert scheme parametrizing subschemes
of~$\mathbb{A}^n$ of length~$\mu$ (see~\cite{MS}, Section 18.4).

\item There is an irreducible component of~$\mathbb{B}_{\mathcal{O}}$
of dimension $n\mu$ which is the closure of the set of radical ideals
having an $\mathcal{O}$-border basis.

\item The dimension of~$\mathbb{B}_{\mathcal{O}}$
is claimed to be $n\mu$ in~\cite{S}, Prop.\ 8.13.
Example~\ref{exIarrobino} shows that A.~Iarrobino's example of
a high-dimensional component of the Hilbert scheme yields a
counterexample to this claim.
It follows that the border basis scheme is in general not
irreducible.

\item For every term ordering~$\sigma$, there is a subset
of~$\mathbb{B}_{\mathcal{O}}$ which parametrized all
ideals~$I$ such that $\mathcal{O} = \mathcal{O}_\sigma(I)$.
These subsets have turned out to be useful for studying
the Hilbert scheme parametrizing subschemes
of~$\mathbb{A}^n$ of length~$\mu$ (see for instance~\cite{CV}
and~\cite{NS}).

\item In the case $n=2$ more precise information is available:
for instance, it is known that $\mathbb{B}_{\mathcal{O}}$ is reduced, irreducible
and smooth of dimension $2\mu$ (see~\cite{Ha}, \cite{Hu1} and~\cite{MS}, Ch.\ 18).

\end{items}
\end{remark}

As usual, a moduli space such as the border basis scheme comes
together with a universal family. In the present setting it is
defined as follows.

\begin{definition}
Let $G=\{g_1,\dots,g_\nu\} \subset K[x_1,\dots,x_n,c_{11},\dots,c_{\mu\nu}]$
with $g_j = b_j -\sum_{i=1}^\mu c_{ij}t_i$ for $j=1,\dots,\nu$ be the
generic $\mathcal{O}$-border prebasis.
The ring
$K[x_1,\dots,x_n,c_{11},\dots,c_{\mu\nu}]/(I(\mathbb{B}_{\mathcal{O}})+(g_1,
\dots, g_\nu ))$ will be denoted by~$U_{\mathcal{O}}$.
Then the natural homomorphism of $K$-algebras
$$
\Phi:\; B_{\mathcal{O}} \;\longrightarrow\; U_{\mathcal{O}} \cong
B_{\mathcal{O}}[x_1,\dots,x_n]/ (g_1,\dots,g_\nu)
$$
is called the {\em universal $\mathcal{O}$-border basis family}.
\end{definition}

The fibers of the universal $\mathcal{O}$-border basis family
are precisely the quotient rings $P/I$ for which~$I$ is a zero-dimensional
ideal which has an $\mathcal{O}$-border basis. The special fiber, i.e.\
the fiber corresponding to $(c_{11},\dots,c_{\mu\nu})$, is the ring $P/\BT_{\mathcal{O}}$.
It is the only fiber in the family which is defined by a monomial ideal.
Although it is known that
the universal family is free with basis~$\mathcal{O}$ (see~\cite{GLS}
or~\cite{Hu2}), we believe that the following proof which generalizes
the method in~\cite{M} is very elementary and conceptually simple.

\begin{theorem}{\bf (The Universal Border Basis Family)}\label{universal}\\
Let $\Phi: B_{\mathcal{O}} \longrightarrow U_{\mathcal{O}}$ be
the universal $\mathcal{O}$-border basis family. Then the residue
classes of the elements of~$\mathcal{O}$ are a
$B_{\mathcal{O}}$-module basis of~$U_{\mathcal{O}}$. In particular,
the map~$\Phi$ is a flat homomorphism.
\end{theorem}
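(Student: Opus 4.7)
The plan is to verify spanning and linear independence of the classes of $t_1,\dots,t_\mu$ in $U_{\mathcal{O}}$ as a $B_{\mathcal{O}}$-module separately. Spanning is the easy half: applying the Border Division Algorithm (\cite{KR2}, Prop.~6.4.11) to the generic border prebasis $G=\{g_1,\dots,g_\nu\}$ and inducting on the $\mathcal{O}$-index shows that every term $t\in\mathbb{T}^n$ can be rewritten modulo $(g_1,\dots,g_\nu)$ as a $B_{\mathcal{O}}$-linear combination of $t_1,\dots,t_\mu$. Hence the residue classes of $\mathcal{O}$ generate $U_{\mathcal{O}}$ over $B_{\mathcal{O}}$.

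For linear independence I would build an explicit faithful test module out of the generic multiplication matrices. Let $V=B_{\mathcal{O}}^{\mu}$ be free with basis $e_1,\dots,e_\mu$ labelled by $t_1,\dots,t_\mu$, and define $B_{\mathcal{O}}$-linear endomorphisms $X_k$ of $V$ via the matrices $\mathcal{A}_k$. The entire point of the defining equations of $\mathbb{B}_{\mathcal{O}}$ is that the entries of $\mathcal{A}_k\mathcal{A}_\ell-\mathcal{A}_\ell\mathcal{A}_k$ vanish in $B_{\mathcal{O}}$, so the $X_k$ commute pairwise and turn $V$ into a $B_{\mathcal{O}}[x_1,\dots,x_n]$-module.

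The critical step is to show that $V$ descends to a $U_{\mathcal{O}}$-module, i.e.\ that each $g_j$ acts as zero. Using $1=t_1$, I would first check by induction on degree that $t_i\cdot e_1=e_i$ for every $t_i\in\mathcal{O}$: writing $t_i=x_k t_{i'}$ with $t_{i'}\in\mathcal{O}$, the convention built into Def.~6.4.29 of \cite{KR2} says that column $i'$ of $\mathcal{A}_k$ equals $e_i$, which closes the induction. For a border term $b_j$, choosing a factorization $b_j=x_k t_i$ with $t_i\in\mathcal{O}$, the same convention says that column $i$ of $\mathcal{A}_k$ is $(c_{1j},\dots,c_{\mu j})^{T}$, so
$$
b_j\cdot e_1 \;=\; X_k(t_i\cdot e_1) \;=\; X_k(e_i) \;=\; \sum_{m=1}^\mu c_{mj}\,e_m \;=\; \Bigl(\sum_{m=1}^\mu c_{mj}t_m\Bigr)\cdot e_1,
$$
giving $g_j\cdot e_1=0$. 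Commutativity of the $X_k$ then propagates this identity to all of $V$, because every $e_i$ equals $t_i\cdot e_1$ and hence $g_j\cdot e_i=t_i\cdot(g_j\cdot e_1)=0$.

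Once $V$ is a $U_{\mathcal{O}}$-module, linear independence is immediate: a relation $\sum_i f_it_i=0$ in $U_{\mathcal{O}}$ with $f_i\in B_{\mathcal{O}}$, upon acting on $e_1$, becomes $\sum_i f_ie_i=0$ in the free $B_{\mathcal{O}}$-module $V$, forcing all $f_i=0$. Combined with spanning this produces the claimed $B_{\mathcal{O}}$-basis, and flatness of $\Phi$ then follows from freeness. The main obstacle I expect is the bookkeeping needed to check that the column of $\mathcal{A}_k$ indexed by $t_i$ really is the coefficient vector of $b_j$ no matter which factorization $b_j=x_kt_i$ is chosen, but this is precisely the compatibility already built into the definition of the formal multiplication matrices, so no further work should be required.
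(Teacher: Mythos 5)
Your proof is correct. For linear independence---the substantive half---your argument matches the paper's: both equip the free $B_{\mathcal{O}}$-module on $\mathcal{O}$ with a $B_{\mathcal{O}}[x_1,\dots,x_n]$-module structure via the generic multiplication matrices (with commutativity being precisely what $I(\mathbb{B}_{\mathcal{O}})$ encodes), show by induction that $t_i$ acting on $e_1$ gives $e_i$, verify that each $g_j$ annihilates $e_1$ and hence the whole module, and then read off linear independence by evaluating a hypothetical relation at $e_1$. The paper packages this as a surjective map $\overline{\Theta}\colon U_{\mathcal{O}}\to M$; yours is the same map in different notation. Where you genuinely diverge is the spanning step. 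The paper extends scalars to an algebraic closure and invokes the local-global principle together with Nakayama's lemma, reducing surjectivity of $\omega$ to the already-known fact that specializing the $c_{ij}$ at any closed point of $\mathbb{B}_{\mathcal{O}}$ produces an honest $\mathcal{O}$-border basis. You instead run the Border Division Algorithm directly over the coefficient ring $B_{\mathcal{O}}$, using that the border terms $b_j$ appear with coefficient $1$ so the division goes through over an arbitrary base ring. Your route is shorter, stays entirely over $B_{\mathcal{O}}$, and is arguably more in the ``elementary, explicit'' spirit the paper advertises; the paper's route trades directness for the fibrewise, geometric reading of the family. One small point worth making explicit in your write-up: Prop.~6.4.11 of \cite{KR2} is stated over a field, so you should remark that the algorithm only uses that the leading coefficients of the $g_j$ are units, which is the case here since they all equal $1$.
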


\begin{proof}
First we prove that the residue classes $\overline{\mathcal{O}}$
are a system of generators of the
$B_{\mathcal{O}}$-module $U_{\mathcal{O}}\cong
B_{\mathcal{O}}[x_1,\dots,x_n]/(G)$
where $G=\{g_1, \dots, g_\nu\}$ is the generic $\mathcal{O}$-border prebasis.
In order to show that the map $\omega: B_{\mathcal{O}}^{\nu} \To
U_{\mathcal{O}}$ defined by $e_i\mapsto \bar t_i$ is surjective,
we may extend the base field and hence assume that~$K$ is algebraically
closed. By the local-global principle and the lemma of Nakayama,
it suffices to show that the induced map
$$
\bar\omega: (B_{\mathcal{O}})_{\mathfrak{m}}/\mathfrak{m}
(B_{\mathcal{O}}){\mathfrak{m}} \To
(B_{\mathcal{O}})_{\mathfrak{m}}[x_1,\dots,x_n]/((G) +
\mathfrak{m}(B_{\mathcal{O}})_{\mathfrak{m}}[x_1,\dots,x_n])
$$
is surjective for every maximal ideal $\mathfrak{m}=
(c_{ij}-\alpha_{ij})_{i,j}$ in~$B_{\mathcal{O}}$. In other words, we need to show
that the map~$\omega$ becomes surjective if we substitute values~$\alpha_{ij}\in K$
for the indeterminates~$c_{ij}$ and if these values have the property that
the maximal ideal $(c_{ij}-\alpha_{ij})_{i,j}$
contains $I(\mathbb{B}_{\mathcal{O}})$. Thus the claim follows from the fact that~$G$
becomes an $\mathcal{O}$-border basis after such a substitution, since
its associated formal multiplication matrices commute.

Now we show that~$\overline{\mathcal{O}}$ is $B_{\mathcal{O}}$-linearly independent.
We consider the free $B_{\mathcal{O}}$-submodule
$M=\bigoplus_{i=1}^{\mu} B_{\mathcal{O}}\,t_i$ of $B_{\mathcal{O}}[x_1,\dots,x_n]$
and proceed in the following manner.

\begin{enumerate}
\item We equip~$M$ with a suitable $B_{\mathcal{O}}[x_1,\dots,x_n]$-module structure.

\item We show that this $B_{\mathcal{O}}[x_1,\dots,x_n]$-module is cyclic and construct a
surjective $B_{\mathcal{O}}[x_1,\dots,x_n]$-linear map $\Theta: B_{\mathcal{O}}
[x_1,\dots,x_n] \To M$ which maps~$t_i$ to~$t_i$.

\item We prove that the kernel of~$\Theta$ is precisely $(G)$.
\end{enumerate}

Altogether, it follows that~$\Theta$ induces a map $\overline{\Theta}:
B_{\mathcal{O}}[x_1,\dots,x_n]/(G) \To M$ which is an isomorphism of
$B_{\mathcal{O}}$-modules and maps~$\bar t_i$ to~$t_i$. Thus $\overline{\mathcal{O}}=\{\bar t_1,
\dots,\bar t_{\mu}\}$ is a $B_{\mathcal{O}}$-basis of~$U_{\mathcal{O}}$, as claimed.

To do Step~1, we let $\overline{\mathcal{A}}_j$ be the image of the
generic multiplication matrix in $\Mat_{\mu}(B_{\mathcal{O}})$. Then we define
\begin{eqnarray}
a \ast \sum_{i=1}^\mu a_it_i & = &
(t_1,\dots,t_{\mu}) \cdot a\,\overline{\mathcal{I}}_\mu
\cdot (a_1,\dots,a_\mu)^{\rm tr}
= \sum_{i=1}^\mu a\, a_i\, t_i \\
x_j\ast \sum_{i=1}^\mu a_it_i & = &
(t_1,\dots,t_\mu) \cdot \overline{\mathcal{A}}_j
\cdot (a_1,\dots, a_\mu)^{\rm tr}
\end{eqnarray}
for $a, a_1,\dots,a_\mu\in B_{\mathcal{O}}$ and $j=1,\dots,n$.
Using this definition, the equalities
$$
x_k x_j\ast \sum_{i=1}^\mu a_it_i = x_k\ast (x_j\ast \sum_{i=1}^\mu a_it_i )=
(t_1,\dots,t_{\mu}) \cdot \overline{\mathcal{A}}_k\overline{\mathcal{A}}_j
\cdot (a_1,\dots,a_\mu)^{\rm tr}
\leqno{(3)}
$$
and the fact that the matrices~$\overline{\mathcal{A}}_j$ commute show that
this definition equips~$M$ with the structure of a $B_{\mathcal{O}}[x_1,\dots,x_n]$-module.
By using induction, we get
$$
f \ast \sum_{i=1}^\mu a_it_i = (t_1,\dots,t_\mu) \cdot
f(\overline{\mathcal{A}}_1,\dots,\overline{\mathcal{A}}_n) \cdot
(a_1,\dots, a_\mu)^{\rm tr} \leqno{(4)}
$$
for every $f\in B_{\mathcal{O}}[x_1,\dots,x_n]$ and all $a_1,\dots,a_\mu \in B_{\mathcal{O}}$.

For Step~2, we assume w.l.o.g.\ that $t_1=1$. Using induction on
$\deg(t_i)$, we want to show that $t_i\ast t_1=t_i$ for $i=1,\dots,\mu$.
The case $t_i=1$ follows from $(1)$. For the induction
step, we write $t_i=x_k t_\ell$ and using $(2)$, $(3)$ and $(4)$ we calculate
$$
t_i \ast t_1= x_k\ast (t_\ell\ast t_1) =x_k \ast t_\ell
= (t_1,\dots,t_\mu) \cdot \overline{\mathcal{A}}_k \cdot e_\ell^{\rm tr} =
(t_1,\dots,t_\mu)\cdot e_i^{\rm tr} = t_i
$$
It follows that $M$ is a cyclic $B_{\mathcal{O}}[x_1,\dots,x_n]$-module generated by~$t_1$.
Thus we obtain a surjective $B_{\mathcal{O}}[x_1,\dots,x_n]$-linear map
$\Theta: B_{\mathcal{O}}[x_1,\dots,x_n] \To M$ which is defined by~$f\mapsto f\ast t_1$.
We have just shown that~$\Theta$ satisfies $\Theta(t_i)=t_i$ for $i=1,\dots,\mu$.

Finally, to prove Step~3, we want to show that $\Theta(g_j)=0$ for $j=1,\dots,\nu$.
We write $b_j=x_kt_\ell$ and calculate $\Theta(g_i) = g_1\ast t_1 = (t_1,\dots, t_\mu) \cdot
g_j(\overline{\mathcal{A}}_1,\dots,\overline{\mathcal{A}}_n) \cdot e_1^{\rm tr}$.
In particular, we get
\begin{eqnarray*}
g_j(\overline{\mathcal{A}}_1,\dots,\overline{\mathcal{A}}_n) \cdot e_1^{\rm tr}
&=& b_j(\overline{\mathcal{A}}_1,\dots,\overline{\mathcal{A}}_n)\cdot e_1^{\rm tr} -
{\textstyle\sum\limits_{i=1}^\mu} c_{ij}\; t_i(\overline{\mathcal{A}}_1,\dots,
\overline{\mathcal{A}}_n) \cdot e_1^{\rm tr}\\
&=& \overline{\mathcal{A}}_k \cdot t_\ell(\overline{\mathcal{A}}_1,\dots,
\overline{\mathcal{A}}_n) \cdot e_1^{\rm tr} -
{\textstyle\sum\limits_{i=1}^\mu} c_{ij}\;e_i^{\rm tr}
= \overline{\mathcal{A}}_k \cdot e_\ell^{\rm tr} - {\textstyle\sum\limits_{i=1}^\mu}
c_{ij}\;e_i^{\rm tr}\\
&=&  {\textstyle\sum\limits_{i=1}^\mu} c_{ij}\; e_i^{\rm tr}
- {\textstyle\sum\limits_{i=1}^\mu} c_{ij}\; e_i^{\rm tr} =0
\end{eqnarray*}
We have checked that  $\Theta(g_j)=0$   for $j=1,\dots,\nu$.
Consequently, the map~$\Theta$ induces a $B_{\mathcal{O}}$-linear map
$\overline{\Theta}: B_{\mathcal{O}}[x_1,\dots,x_n]/( G) \To M$.
We know already that~$\overline{\mathcal{O}}$ generates the left-hand side
and $\mathcal{O}$ is a $B_{\mathcal{O}}$-basis of the right-hand side.
Hence the surjective map $\overline{\Theta}$ is also injective.
\end{proof}

In the remainder of this section we recall the connection between flat deformations
over~$K[z]$ of border bases and rational curves on the border basis scheme.
A rational curve on the $\mathcal{O}$-border basis scheme
corresponds to a $K$-algebra homomorphism $\Psi: B_{\mathcal{O}} \To K[z]$
of the corresponding affine coordinate rings. If we restrict the universal
family of $\mathcal{O}$-border bases to this rational curve, we obtain
the following flat deformation of border bases.

\begin{corollary}\label{ratcurve}
Let~$z$ be a new indeterminate, and let
$\Psi: B_{\mathcal{O}}\To K[z]$ be a
homomorphism of $K$-algebras. By applying the base change~$\Psi$ to the
universal family~$\Phi$, we get a homomorphism of $K[z]$-algebras
$$
\Phi_{K[z]}=\Phi\otimes_{B_{\mathcal{O}}} K[z]:\; K[z]
\To U_{\mathcal{O}}  \otimes_{B_{\mathcal{O}}} K[z]
$$
Then the residue classes of the elements of~$\mathcal{O}$ form a
$K[z]$-module basis of the right-hand side.
In particular, the map $\Phi_{K[z]}$ defines a flat family.
\end{corollary}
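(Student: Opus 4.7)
The plan is to derive this corollary directly from Theorem~\ref{universal} by invoking the standard fact that freeness of modules is preserved under arbitrary base change. By the theorem, the residue classes $\bar t_1,\dots,\bar t_\mu$ form a $B_{\mathcal{O}}$-module basis of $U_{\mathcal{O}}$, so there is an isomorphism of $B_{\mathcal{O}}$-modules
\[
U_{\mathcal{O}} \;\cong\; \bigoplus_{i=1}^{\mu} B_{\mathcal{O}}\,\bar t_i .
\]

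The key step is then to tensor both sides with $K[z]$ along the $K$-algebra homomorphism $\Psi: B_{\mathcal{O}} \to K[z]$, which turns $K[z]$ into a $B_{\mathcal{O}}$-algebra. Since tensor product commutes with direct sums, we obtain
\[
U_{\mathcal{O}} \otimes_{B_{\mathcal{O}}} K[z] \;\cong\; \bigoplus_{i=1}^{\mu} \bigl(B_{\mathcal{O}}\,\bar t_i \otimes_{B_{\mathcal{O}}} K[z]\bigr) \;\cong\; \bigoplus_{i=1}^{\mu} K[z]\cdot(\bar t_i\otimes 1).
\]
This shows directly that the residue classes of the elements of~$\mathcal{O}$ form a $K[z]$-basis of $U_{\mathcal{O}} \otimes_{B_{\mathcal{O}}} K[z]$, which is the first assertion.

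The second assertion, that $\Phi_{K[z]}$ is flat, then follows immediately: since $U_{\mathcal{O}} \otimes_{B_{\mathcal{O}}} K[z]$ is a free (hence flat) $K[z]$-module, the structural homomorphism $\Phi_{K[z]}: K[z] \to U_{\mathcal{O}} \otimes_{B_{\mathcal{O}}} K[z]$ makes the target a flat $K[z]$-algebra. There is essentially no obstacle here; the entire content of the corollary lies in Theorem~\ref{universal}, and the proof is a one-line bookkeeping argument about how free modules behave under base change. The only thing worth spelling out is the identification of $U_{\mathcal{O}} \otimes_{B_{\mathcal{O}}} K[z]$ with $B_{\mathcal{O}}[x_1,\dots,x_n]/(g_1,\dots,g_\nu) \otimes_{B_{\mathcal{O}}} K[z] \cong K[z][x_1,\dots,x_n]/(\Psi(g_1),\dots,\Psi(g_\nu))$, so that the basis statement has the expected geometric meaning of a flat family of border bases parametrized by the affine line.
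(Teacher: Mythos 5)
Your proof is correct and is exactly the argument the paper has in mind: the paper states the corollary without an explicit proof, treating it as an immediate consequence of Theorem~\ref{universal} via the standard fact that freeness (hence flatness) of a module is preserved under base change. Your spelled-out identification $U_{\mathcal{O}} \otimes_{B_{\mathcal{O}}} K[z] \cong K[z][x_1,\dots,x_n]/(\Psi(g_1),\dots,\Psi(g_\nu))$ is a helpful addition but does not change the substance.
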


This corollary can be used to construct flat deformations over~$K[z]$ of border bases.
Suppose the maximal ideal $\Psi^{-1}(z-1)$ corresponds to a given
$\mathcal{O}$-border basis and the maximal ideal $\Psi^{-1}(z)$
is the ideal $( c_{11},\dots,c_{\mu\nu})$ which corresponds to the
border term ideal $( b_1,\dots,b_\nu )$. In other words, suppose
that the rational curve connects a given point to the point $(0,\dots,0)$
which corresponds to the border term ideal. Then the map $\Phi_{K[z]}$
defines a flat family over~$K[z]$ whose generic fiber $P/I$ is defined by the ideal~$I$
generated by the given $\mathcal{O}$-border basis and whose special fiber
$P/( b_1,\dots,b_\nu )$ is defined by the border term ideal.

Another application of the theorem is the following criterion for checking
the flatness of a family of border bases.

\begin{corollary}{\bf (Flatness Criterion for Families of Border
Bases)}\label{FlatCrit}\\
Let~$z$ be a new indeterminate, let $\widetilde{P}=K[z][x_1,\dots,x_n]$,
and let $g_j=b_j -\sum_{i=1}^\mu a_{ij}(z)t_i\in\widetilde{P}$
be polynomials with coefficients $a_{ij}(z)\in K[z]$.
Let $\widetilde{I}$ be the ideal in $\widetilde{P}$ generated by
$G=\{g_1,\dots,g_\nu\}$
and assume that the formal multiplication matrices $\mathcal{A}_k
\in\Mat_\mu(K[z])$ of~$G$ are pairwise commuting.

\begin{items}
\item For every $c\in K$, the set $\{g_1\vert_{z\mapsto
c},\dots, g_\nu\vert_{z\mapsto c}\}$ is an $\mathcal{O}$-border basis
of the ideal $I_c=\widetilde{I}\vert_{z\mapsto c}$.

\item The canonical $K$-algebra homomorphism
$$
\phi:\quad K[z] \;\To\; K[z][x_1,\dots,x_n]/\widetilde{I}
$$
defines a flat family. More precisely, the residue classes of the elements
of~$\mathcal{O}$ are a $K[z]$-basis of $K[z][x_1,\dots,x_n]/\widetilde{I}$.

\end{items}
\end{corollary}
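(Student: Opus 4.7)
The plan is to reduce the corollary to what has already been established about the universal border basis family and the correspondence between $K$-rational points of $\mathbb{B}_{\mathcal{O}}$ and $\mathcal{O}$-border bases. The key observation is that the data $(a_{ij}(z))$ defines a $K$-algebra homomorphism $\Psi: B_{\mathcal{O}} \to K[z]$, so that Corollary~\ref{ratcurve} applies almost verbatim.

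First I would define the map $\Psi$ on the polynomial ring $K[c_{11},\dots,c_{\mu\nu}]$ by sending $c_{ij}\mapsto a_{ij}(z)$. The claim is that $\Psi$ descends to the quotient $B_{\mathcal{O}}$, i.e., that $\Psi$ sends the ideal $I(\mathbb{B}_{\mathcal{O}})$ to zero. This is exactly the content of the commutativity hypothesis: by construction, the image under $\Psi$ of the generic multiplication matrix $\mathcal{A}_k$ is precisely the given formal multiplication matrix of~$G$ in $\Mat_\mu(K[z])$, and hence the entries of $\Psi(\mathcal{A}_k\mathcal{A}_\ell - \mathcal{A}_\ell\mathcal{A}_k)$ vanish by hypothesis. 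Since those entries generate $I(\mathbb{B}_{\mathcal{O}})$, the map $\Psi: B_{\mathcal{O}} \to K[z]$ is well defined.

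Next, I would invoke Corollary~\ref{ratcurve} for this $\Psi$. Under the base change $B_{\mathcal{O}} \to K[z]$, the generic $\mathcal{O}$-border prebasis $\{g_1,\dots,g_\nu\}$ specializes to the given family $G=\{g_1,\dots,g_\nu\}\subset\widetilde{P}$, so the ring $U_{\mathcal{O}}\otimes_{B_{\mathcal{O}}} K[z]$ is naturally identified with $\widetilde{P}/\widetilde{I}$. The corollary then says that the residue classes of $\mathcal{O}$ form a $K[z]$-module basis of $\widetilde{P}/\widetilde{I}$, and that $\phi$ is flat. This yields part~b) immediately.

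For part~a), fix $c\in K$ and compose $\Psi$ with the evaluation $\eval_c: K[z]\to K$, $z\mapsto c$. The resulting $K$-algebra homomorphism $B_{\mathcal{O}}\to K$ corresponds to a $K$-rational point $(\alpha_{ij})=(a_{ij}(c))\in\mathbb{B}_{\mathcal{O}}(K)$, and by the correspondence recalled right after Definition~\ref{defBBS} (i.e., \cite{KR2}, Thm.~6.4.30), any $K$-rational point of $\mathbb{B}_{\mathcal{O}}$ yields an $\mathcal{O}$-border basis. Equivalently, tensoring the flat family of part~b) with $K[z]/(z-c)$ shows that $\mathcal{O}$ is a $K$-basis of $P/I_c$, which is exactly the condition for $\{g_1|_{z\mapsto c},\dots,g_\nu|_{z\mapsto c}\}$ to be an $\mathcal{O}$-border basis of~$I_c$. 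Since both steps are essentially bookkeeping on top of the previously established Theorem~\ref{universal} and Corollary~\ref{ratcurve}, there is no real obstacle; the only point worth checking carefully is that $\Psi$ respects the defining relations of $B_{\mathcal{O}}$, which is precisely the commutativity assumption on the $\mathcal{A}_k$.
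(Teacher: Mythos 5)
Your proposal is correct and follows essentially the same route as the paper: defining the homomorphism $\Psi: B_{\mathcal{O}}\to K[z]$ via $c_{ij}\mapsto a_{ij}(z)$, observing that it is well defined precisely because of the commutativity hypothesis, and then invoking Corollary~\ref{ratcurve} for part~b) and the point/border-basis correspondence (\cite{KR2}, Thm.~6.4.30) for part~a). The only cosmetic difference is that the paper proves a) first by directly specializing the multiplication matrices and b) second, whereas you prove b) first and then derive a) by specialization; both orderings are fine.
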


\begin{proof}
First we show~a). For every $c\in K$, the matrices $\mathcal{A}_k\vert_{z\mapsto c}$
are the multiplication matrices of $G\vert_{z\mapsto c}$. Thus the claim follows
from~\cite{KR2}, 6.4.30.
Next we prove~b). Since the matrices $\mathcal{A}_k$ commute, the map
$B_{\mathcal{O}}\To K[z]$
defined by $c_{ij} \mapsto a_{ij}(z)$ is a well-defined
homomorphism of $K$-algebras. Hence it suffices to apply the preceding
corollary.
\end{proof}

\begin{remark}
If~$K$ is infinite, the hypothesis that the formal multiplication matrices
$\mathcal{A}_k$ commute can be replaced by the assumption that
the matrices $\mathcal{A}_k \vert_{z\mapsto c}$ commute for every
$c\in K$. This follows from the fact that a polynomial $f\in K[z]$ is zero
if and only if $f(c)=0$ for all $c\in K$.
\end{remark}

Let us have a look at one particular border basis scheme in detail.

\begin{example}\label{affinecell}
Consider the case $n=2$ and $\mathcal{O}=\{1,x,y,xy\}$. The border of~$\mathcal{O}$
is $\partial\mathcal{O} = \{y^2, x^2, xy^2, x^2y\}$, so that in our terminology
we have $\mu=4$, $\nu = 4$, $t_1 = 1$,  $t_2 = x$, $t_3 = y$, $t_4 = xy$,
$b_1 = y^2$, $b_2 = x^2$, $b_3 = xy^2$, and $b_4 = x^2y$.

The generic multiplication matrices are
$$
\mathcal{A}_x =
\left( \begin{array}{cccc}
0 & c_{1\, 2\, } & 0 & c_{1\, 4\, } \\
1 & c_{2\, 2\, } & 0 & c_{2\, 4\, } \\
0 & c_{3\, 2\, } & 0 & c_{3\, 4\, } \\
0 & c_{4\, 2\, } & 1 & c_{4\, 4\, } \end{array}\right)
\hbox{\quad and \quad }
\mathcal{A}_y=
\left( \begin{array}{cccc}
0 & 0 & c_{1\, 1\, } & c_{1\, 3\, } \\
0 & 0 & c_{2\, 1\, } & c_{2\, 3\, } \\
1 & 0 & c_{3\, 1\, } & c_{3\, 3\, } \\
0 & 1 & c_{4\, 1\, } & c_{4\, 3\, } \end{array}\right)
$$

When we compute the ideal generated by the entries of
$\mathcal{A}_x \mathcal{A}_y -\mathcal{A}_y \mathcal{A}_x$ and
simplify its system of generators, we see that
the ideal $I(\mathbb{B}_{\mathcal{O}})$ is generated by
$$
\left.\begin{array}{l}
 \{
 c_{23}c_{41}c_{42} - c_{21}c_{42}c_{43} + c_{21}c_{44}+ c_{11} - c_{23},\;\;
  -c_{21}c_{32} - c_{34}c_{41} + c_{33},\\
\;c_{34}c_{41}c_{42}- c_{32}c_{41}c_{44}+ c_{32}c_{43}+ c_{12}- c_{34},\;\;
-c_{21}c_{32}- c_{23}c_{42}+ c_{24}, \\
\; -c_{23}c_{32}c_{41}+ c_{21}c_{32}c_{43} - c_{21}c_{34}+ c_{13},\;\;
 c_{21}c_{42} + c_{41}c_{44} + c_{31}- c_{43}, \\
\;-c_{21}c_{34}c_{42}+ c_{21}c_{32}c_{44} - c_{23}c_{32} + c_{14},\;\;
 c_{32}c_{41}+ c_{42}c_{43} + c_{22} - c_{44}
 \}
 \end{array}\right.
$$
Thus there are eight free indeterminates, namely
$c_{21}$, $c_{23}$, $c_{32}$, $c_{34}$, $c_{41}$, $c_{42}$, $c_{43}$,
and~$c_{44}$, while the remaining indeterminates depend on the free ones
by the polynomial expressions above. From this we conclude that the border
basis scheme $\mathbb{B}_{\mathcal{O}}$ is an {\it affine cell}
of the corresponding Hilbert scheme,
i.e.\ an open subset which is isomorphic to an affine space.
(This result is in agreement with~\cite{Hu1}, Thm.\ 7.4.1,
but not with~\cite{MS}, Example 18.6.)

Its coordinate ring is explicitly represented by the isomorphism
$$B_{\mathcal{O}} \;\longiso\;
K[c_{21}, c_{23}, c_{32}, c_{34}, c_{41}, c_{42}, c_{43}, c_{44}]
$$
given by
$$
\left.\begin{array}{l}
c_{11} \;\longmapsto\;  -c_{23}c_{41}c_{42} + c_{21}c_{42}c_{43} - c_{21}c_{44}+  c_{23}\\
c_{12} \;\longmapsto\;  -c_{34}c_{41}c_{42} + c_{32}c_{41}c_{44} - c_{32}c_{43}+ c_{34} \\
c_{13} \;\longmapsto\;  c_{23}c_{32}c_{41} -  c_{21}c_{32}c_{43} + c_{21}c_{34}\\
c_{14} \;\longmapsto\;  c_{21}c_{34}c_{42} - c_{21}c_{32}c_{44} + c_{23}c_{32}\\
c_{22} \;\longmapsto\;  -c_{32}c_{41} -  c_{42}c_{43} +  c_{44}\\
c_{24} \;\longmapsto\;   c_{21}c_{32}  + c_{23}c_{42}\\
c_{31} \;\longmapsto\;  -c_{21}c_{42} - c_{41}c_{44} + c_{43}\\
c_{33} \;\longmapsto\;    c_{21}c_{32} + c_{34}c_{41}
 \end{array}\right.
$$
Hence we have
$U_{\mathcal{O}} \cong
K[x,y, c_{21}, c_{23}, c_{32}, c_{34}, c_{41}, c_{42}, c_{43}, c_{44}]/
(\widetilde{g}_1, \widetilde{g}_2, \widetilde{g}_3,  \widetilde{g}_4)$
where
\begin{eqnarray*}
\widetilde{g}_1 &=& y^2 - (-c_{23}c_{41}c_{42} + c_{21}c_{42}c_{43} - c_{21}c_{44}+  c_{23})
\\ && - c_{21}x - (-c_{21}c_{42} - c_{41}c_{44} + c_{43})y
- c_{41}xy,\\
\widetilde{g}_2 &=& x^2 - (-c_{34}c_{41}c_{42} + c_{32}c_{41}c_{44} - c_{32}c_{43}+ c_{34})
\\ && - (-c_{32}c_{41} -  c_{42}c_{43} +  c_{44})x - c_{32}y
- c_{42}xy,\\
\widetilde{g}_3 &=& xy^2 - (c_{23}c_{32}c_{41} -  c_{21}c_{32}c_{43} + c_{21}c_{34})
\\ && - c_{23}x - (c_{21}c_{32} + c_{34}c_{41})y - c_{43}xy,\\
\widetilde{g}_4 &=& x^2y -(c_{21}c_{34}c_{42} - c_{21}c_{32}c_{44} + c_{23}c_{32})
\\ && - (c_{21}c_{32}  + c_{23}c_{42})x - c_{34}y - c_{44}xy,\\
\end{eqnarray*}

The ideal $(\widetilde{g}_1, \widetilde{g}_2, \widetilde{g}_3,  \widetilde{g}_4)$
is the defining ideal of the family of all subschemes of
length four of the affine plane
which have the property that their coordinate ring admits $\overline{\mathcal{O}}$
as a vector space basis.
Since the border basis scheme is isomorphic to an affine space in this case,
we can connect every point to the point corresponding to $(x^2,y^2)$ by a
rational curve. Therefore every ideal in the family can be deformed by a flat deformation
to the monomial ideal $(x^2, y^2)$. Algebraically, it suffices to substitute each
free indeterminate $c_{ij}$ with $z c_{ij}$ where~$z$ is a new indeterminate.
We get the $K$-algebra homomorphism
$$
\Phi_{K[z]}: K[z] \To K[x,y, z, c_{21}, c_{23}, c_{32}, c_{34}, c_{41}, c_{42},
c_{43}, c_{44}]/ (\overline{g}_1, \overline{g}_2, \overline{g}_3,  \overline{g}_4)
$$
where
\begin{eqnarray*}
\overline{g}_1 &=& y^2 - (-z^3c_{23}c_{41}c_{42} + z^3c_{21}c_{42}c_{43}
- z^2c_{21}c_{44}+  zc_{23})
\\ && - zc_{21}x - (-z^2c_{21}c_{42} - z^2c_{41}c_{44} +z c_{43})y - zc_{41}xy,\\
\overline{g}_2 &=& x^2 - (-z^3c_{34}c_{41}c_{42} + z^3c_{32}c_{41}c_{44}
- z^2c_{32}c_{43}+ zc_{34})
\\ && - (-z^2c_{32}c_{41} -  z^2c_{42}c_{43} + z c_{44})x - zc_{32}y - zc_{42}xy,\\
\overline{g}_3 &=& xy^2 - (z^3c_{23}c_{32}c_{41} -  z^3c_{21}c_{32}c_{43} + z^2c_{21}c_{34})
\\ && - zc_{23}x - (z^2c_{21}c_{32} +z^2 c_{34}c_{41})y - zc_{43}xy,\\
\overline{g}_4 &=& x^2y -(z^3c_{21}c_{34}c_{42} - z^3c_{21}c_{32}c_{44} + z^2c_{23}c_{32})
\\ && - (z^2c_{21}c_{32}  + z^2c_{23}c_{42})x - zc_{34}y - zc_{44}xy,\\
\end{eqnarray*}
By Corollary~\ref{ratcurve}, this homomorphism is flat.
For every point on the border basis scheme,
it connects the corresponding ideal to
$\BT_{\mathcal{O}}=(y^2, x^2, xy^2, x^2y) = (x^2,y^2)$.
\end{example}

The next example shows that natural families of ideals can lead us
out of the affine open subset~$\mathbb{B}_{\mathcal{O}}$ of the Hilbert scheme.

\begin{example}
Using $K=\mathbb{R}$ and $P=\mathbb{R}[x,y]$, we consider the family of reduced
zero-dimensional schemes $\mathbb{X}_a = \{(a,2),\, (0,1),\, (0,0),\, (1,0)\}
\subset \mathbb{R}^2$ with $a\in\mathbb{R}$.

\medskip
\makebox[11 true cm]{
\beginpicture
\setcoordinatesystem units <0.4cm,0.4cm>
\setplotarea x from 0 to 4, y from 0 to 3.1
\axis left /
\axis bottom /

\arrow <2mm> [.2,.67] from  3.5 0  to 4 0
\arrow <2mm> [.2,.67] from  0 2.6  to 0 3.1

\put {$\scriptstyle x$} [lt] <0.5mm,0.8mm> at 4.1 0
\put {$\scriptstyle y$} [rb] <1.7mm,0.7mm> at 0 3.1
\put {$\bullet$} at 0 0
\put {$\bullet$} at 1 0
\put {$\bullet$} at 0 1
\put {$\bullet$} at 1.5 2
\put {$\scriptstyle (a,2)$} at 2.6 2
\put {$\cdots$} at 0.8 2
\endpicture}
\medskip

For $\sigma={\tt DegRevLex}$, the reduced $\sigma$-Gr\"obner basis of
the vanishing ideal $I_a\subset P$ of~$\mathbb{X}_a$ is
$$
G'_a=\{ x^2+\tfrac{1}{2}\,a(1-a)y^2-x -\tfrac{1}{2}\,a(1-a)\,y,\; xy-ay^2+ay,\;
y^3-3y^2+2y \}
$$
and thus we have $\mathcal{O}_\sigma(I_a)=\{1,x,y,y^2\}$. We may extend~$G'_a$
to an $\mathcal{O}_\sigma(I_a)$-border basis of~$I_a$ and get
$$
G_a=G'_a \;\cup\; \{ xy^2-2ay^2 +2ay  \}
$$
The residue classes of the elements of~$\mathcal{O}_\sigma(I)$
are a vector space basis of $P/I_a$ for every $a\in\mathbb{R}$.
We let $I=( x^2+\tfrac{1}{2}\,z(1-z)y^2-x -\tfrac{1}{2}\,z(1-z)y,\,
xy-z y^2+zy,\, y^3-3y^2+2y,\, xy^2-2zy^2 +2zy ) \subset P[z]$.
Then the natural map $\mathbb{R}[z]\To P[z]/I$ is a flat
homomorphism whose fibers are the rings $P/I_a$. Thus the
point corresponding to~$G_a$ on the border basis scheme
$\mathbb{B}_{\mathcal{O}_\sigma(I_a)}$ is connected to the point
representing~$G_0$ via a rational curve.

Now we consider the order ideal $\mathcal{O}=\{1,x,y,xy\}$. For $a\ne 0$, the
set~$\mathbb X_a$ is a complete intersection of type $(2,2)$. Its vanishing
ideal~$I_a$ has an $\mathcal{O}$-border basis, namely
$$
H_a= \{ y^2-\tfrac{1}{a}\,xy-y,\;  xy^2-2xy,\; x^2y-axy,\;
x^2+\tfrac{1}{2}\,(1-a)xy-x \}
$$
However, for $a=0$, the ideal $I_0$ has no $\mathcal{O}$-border basis
because $xy\in I_0$. One of the coefficients in~$H_a$ tends to~$\infty$
as $a\To 0$. This happens since the scheme~$\mathbb{B}_{\mathcal{O}}$
is not complete.
\end{example}

\bigbreak
%
%

\section{Defining Equations for the Border Basis Scheme}
\label{Defining Equations for the Border Basis Scheme}

The defining equations for the border basis scheme
can be constructed in different ways.
One construction is given
by imposing the commutativity law to the multiplication matrices,
as we have seen in the preceding section.
Another construction was given in~\cite{Hu2}, and a different
but related one in~\cite{KK1} and~\cite{S}.
After describing this alternative construction, we use it to get
rid of as many generators of the vanishing ideal of~$\mathbb{B}_{\mathcal{O}}$
as possible and examine some claims in~\cite{S} in this regard.

Let $\mathcal{O}=\{t_1,\dots,t_\mu\}$ be an order ideal and
$\partial\mathcal{O}=\{b_1,\dots,b_\nu\}$ its border. In~\cite{KK1},
Def.~17, two terms $b_i,b_j\in\partial\mathcal{O}$ are called
{\it next-door neighbors} if $b_i=x_k b_j$ for some $k\in\{1,\dots,n\}$
and {\it across-the street neighbors} if $x_k b_i = x_\ell b_j$
for some $k,\ell\in\{1,\dots,n\}$. In addition to these notions
we shall say that across-the-street neighbors $b_i,b_j$
with $x_k b_i= x_\ell b_j$ are {\it across-the-corner neighbors}
of there exists a term $b_m\in\partial\mathcal{O}$ such that
$b_i=x_\ell b_m$ and $b_j=x_k b_m$.

In~\cite{S}, Def.~8.5, the graph
whose vertices are the border terms and whose edges are given by
the neighbor relation is called the {\it border web} of~$\mathcal{O}$.
The Buchberger criterion for border bases (see~\cite{KK1}, Prop.~18
and~\cite{S}, Thm.~8.11) says that an $\mathcal{O}$-border
prebasis $\{g_1,\dots,g_\nu\}$ with $g_j=b_j-\sum_{i=1}^\mu
a_{ij}t_i$ and $a_{ij}\in K$ is an $\mathcal{O}$-border basis
if and only if the S-polynomials $S(g_i,g_j)$ reduce to zero
using~$G$ for all $(i,j)$ such that~$b_i$ and~$b_j$ are neighbors.
This characterization can be used to construct the equations
defining the border basis scheme in an alternative way.

\begin{proposition}{\bf (Lifting Neighbor Syzygies)}%
\label{altgenBBS}\\
Let $G=\{g_1,\dots,g_\nu\}$ be the generic $\mathcal{O}$-border
prebasis, where $g_j=b_j -\sum_{i=1}^\mu c_{ij}t_i \in K[x_1,\dots,x_n,
c_{11},\dots, c_{\mu\nu}]$, let $\mathcal{A}_1,\dots,\mathcal{A}_n
\in\Mat_{\mu}(K[c_{ij}])$ be the generic multiplication matrices
with respect to~$\mathcal{O}$, and let $c_j=(c_{1j},\dots,c_{\mu j})^{\rm tr}
\in\Mat_{\mu,1}(K[c_{ij}])$ for $j=1,\dots,\nu$.
Consider the following sets of polynomials
in $K[c_{11},\dots,c_{\mu\nu}]$:

\begin{enumerate}
\item If $b_i,b_j\in\partial\mathcal{O}$ are next-door neighbors
with $b_i=x_k b_j$, let $\ND(i,j)$ be the set of polynomial entries
of $c_i - \mathcal{A}_k c_j$.

\item If $b_i,b_j\in\partial\mathcal{O}$ are across-the-street neighbors
with $x_k b_i=x_\ell b_j$, let $\AS(i,j)$ be the set of polynomial entries
of $\mathcal{A}_k c_i - \mathcal{A}_\ell c_j$.
\end{enumerate}

Then the following claims hold true.

\begin{items}
\item The union of all sets $\ND(i,j)$ and all sets $\AS(i,j)$ contains
the set of the nontrivial entries of the commutators $\mathcal{A}_k
\mathcal{A}_\ell -\mathcal{A}_\ell \mathcal{A}_k$ with $1\le k<\ell \le n$.

\item If one removes from this union all sets $\AS(i,j)$ such that
$b_i,b_j$ are across-the-corner neighbors, one gets precisely
the set of the nontrivial entries of the commutators $\mathcal{A}_k
\mathcal{A}_\ell -\mathcal{A}_\ell \mathcal{A}_k$ with $1\le k<\ell \le n$.
In particular, the remaining union generates the vanishing ideal
$I(\mathbb{B}_{\mathcal{O}})$ of the $\mathcal{O}$-border basis scheme.

\item The polynomials in the sets $\AS(i,j)$ corresponding to
across-the-corner neighbors $b_i,b_j$ are contained
in~$I(\mathbb{B}_{\mathcal{O}})$.

\end{items}
\end{proposition}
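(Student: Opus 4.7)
The plan is to compute the $j$-th column of $\mathcal{A}_k\mathcal{A}_\ell - \mathcal{A}_\ell\mathcal{A}_k$ explicitly for every $t_j\in\mathcal{O}$ and every pair $1\le k<\ell\le n$, and to match the result against the syzygy polynomials attached to the neighbor relations. The crucial observation is that column $j$ of $\mathcal{A}_k$ records the normal form of $x_k t_j$: it equals the standard basis vector $e_m$ when $x_k t_j=t_m\in\mathcal{O}$, and equals the vector $c_i=(c_{1i},\dots,c_{\mu i})^{\rm tr}$ when $x_k t_j=b_i\in\partial\mathcal{O}$. Combined with the elementary fact that every divisor of an element of $\partial\mathcal{O}$ lies in $\mathcal{O}\cup\partial\mathcal{O}$, this sets up a clean case analysis.

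For parts (a) and (b) I would distinguish four cases according to where $x_k t_j$ and $x_\ell t_j$ land. If both land in $\mathcal{O}$, the two products $\mathcal{A}_k\mathcal{A}_\ell e_j$ and $\mathcal{A}_\ell\mathcal{A}_k e_j$ coincide and the commutator column vanishes. If $x_k t_j=t_m\in\mathcal{O}$ while $x_\ell t_j=b_i\in\partial\mathcal{O}$, then $x_\ell t_m=x_k b_i$ cannot lie in $\mathcal{O}$ (since $b_i\notin\mathcal{O}$), so it equals some $b_s\in\partial\mathcal{O}$; the resulting next-door pair $b_s=x_k b_i$ yields a commutator column equal to $\mathcal{A}_k c_i-c_s$, i.e.\ $-\ND(s,i)$ entry-wise. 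The case with $k$ and $\ell$ swapped is symmetric. Finally, if $x_k t_j=b_\alpha$ and $x_\ell t_j=b_\beta$ both lie in $\partial\mathcal{O}$, then $(b_\alpha,b_\beta)$ is across-the-street via $x_\ell b_\alpha=x_k b_\beta$ with common divisor $t_j\in\mathcal{O}$; this pair is therefore \emph{not} across-the-corner, and the column equals $\mathcal{A}_k c_\beta-\mathcal{A}_\ell c_\alpha=\AS(\beta,\alpha)$. This proves (a). For the reverse inclusion in (b), I would check that every $\ND(s,i)$ and every non-corner $\AS(\alpha,\beta)$ is realized by a suitable column. Given $\ND(s,i)$ with $b_s=x_k b_i$, pick $r$ with $b_s/x_r\in\mathcal{O}$; then $r\ne k$ (otherwise $b_i\in\mathcal{O}$), and $t_j:=b_i/x_r$ lies in $\mathcal{O}$ and produces the desired column. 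For a non-corner pair with common divisor $v=b_\alpha/x_\ell=b_\beta/x_k\in\mathcal{O}$, simply take $t_j=v$.

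For part (c), I would exploit the algebraic identity
\[
\mathcal{A}_k c_i-\mathcal{A}_\ell c_j
= (\mathcal{A}_k\mathcal{A}_\ell-\mathcal{A}_\ell\mathcal{A}_k)\,c_m
+ \mathcal{A}_k(c_i-\mathcal{A}_\ell c_m)
- \mathcal{A}_\ell(c_j-\mathcal{A}_k c_m),
\]
valid for every across-the-corner pair $(b_i,b_j)$ witnessed by $b_m\in\partial\mathcal{O}$ with $b_i=x_\ell b_m$ and $b_j=x_k b_m$. Each entry on the right is a $K[c_{ij}]$-linear combination of commutator entries and of entries of $\ND(i,m)$ and $\ND(j,m)$, all of which lie in $I(\mathbb{B}_{\mathcal{O}})$ by the definition of the ideal and by part (b); hence so does the left-hand side, proving that across-the-corner $\AS$ polynomials are redundant as generators.

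The main obstacle I anticipate is the case bookkeeping in parts (a) and (b): first, arguing in case (ii) that $x_k b_i$ really lands in $\partial\mathcal{O}$ rather than further out in some higher border, and second, arguing in the converse direction of (b) that the variable $r$ with $b_s/x_r\in\mathcal{O}$ can always be chosen distinct from $k$. Both hinge on the fact that $\mathcal{O}$ is closed under divisors together with its immediate consequence that divisors of border elements lie in $\mathcal{O}\cup\partial\mathcal{O}$, but carrying out these verifications cleanly while keeping track of signs and indices across the case split will require some care.
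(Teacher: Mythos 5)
Your proof is correct, and for parts (a) and (b) it takes a genuinely more self-contained route than the paper. The paper's own proof sets up the correspondence between the lifted neighbor syzygies and the vectors $c_i-\mathcal{A}_k c_j$ (resp.\ $\mathcal{A}_k c_i-\mathcal{A}_\ell c_j$), but then defers the actual identification of these entries with the nontrivial commutator entries to \cite{KK1}, Section~4 and \cite{S}, Prop.~8.10. You instead carry out a direct column-by-column inspection of $\mathcal{A}_k\mathcal{A}_\ell-\mathcal{A}_\ell\mathcal{A}_k$ using the observation that column $j$ of $\mathcal{A}_k$ is $e_m$ when $x_kt_j=t_m\in\mathcal{O}$ and is $c_i$ when $x_kt_j=b_i\in\partial\mathcal{O}$; the resulting four-way case split (both products in $\mathcal{O}$, one in $\mathcal{O}$ and one in $\partial\mathcal{O}$ in either order, both in $\partial\mathcal{O}$) cleanly yields the zero columns, the $\ND$ vectors, and the non-corner $\AS$ vectors, and your argument for the converse inclusion (choosing $r\neq k$ with $b_s/x_r\in\mathcal{O}$, respectively taking $t_j=b_\alpha/x_\ell=b_\beta/x_k\in\mathcal{O}$ precisely because the pair is \emph{not} across-the-corner) is sound. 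This buys a proof readable without reference chasing, and makes the role of closure under divisors and the definition of across-the-corner neighbors explicit. Your part (c) uses exactly the same three-term identity as the paper, so that portion is essentially identical.
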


\begin{proof} First we prove~a) and~b).
The S-polynomials $g_i - x_k g_j$ resp.\ $x_k g_i - x_\ell g_j$
are $K[c_{ij}]$-linear combinations of terms in $\mathcal{O}\cup
\partial\mathcal{O}$. We want to find representations of these
polynomials as $K[c_{ij}]$-linear combinations of elements
of~$\mathcal{O}$ only. Since we have $b_i - x_k b_j=0$ resp.\
$x_k b_i -x_\ell b_j=0$, we have to represent
$(-\sum_{m=1}^\mu c_{mi}t_m) - x_k \, (-\sum_{m=1}^\mu c_{mj}t_m)$
resp.\ $x_k \, (-\sum_{m=1}^\mu c_{mi}t_m) - x_\ell \, (-\sum_{m=1}^\mu c_{mj}t_m)$
using~$\mathcal{O}$. By the definition of the generic multiplication matrices,
these representations are given by $(t_1,\dots,t_\mu) \cdot
(c_i - \mathcal{A}_k c_j)$ resp.\ $(t_1,\dots,t_\mu) \cdot
(\mathcal{A}_k c_i - \mathcal{A}_\ell c_j)$. The coefficients of the terms~$t_i$
in these representations are precisely the polynomials in $\ND(i,j)$ resp.\ in
$\AS(i,j)$.

Now we consider the polynomials in the sets $\ND(i,j)$ and in the sets
$\AS(i,j)$ for which $b_i,b_j$ are not across-the-corner neighbors. The fact that
these polynomials are exactly the nontrivial entries of the
commutators $\mathcal{A}_k \mathcal{A}_\ell -\mathcal{A}_\ell \mathcal{A}_k$
was checked in~\cite{KK1}, Section~4 resp.~\cite{S}, Prop.~8.10.

It remains to show~c). Let $b_i=x_\ell b_m$ and $b_j=x_k b_m$.
By what we have shown so far, the polynomials which
are the components of $c_i -\mathcal{A}_\ell c_m$ and $c_j -\mathcal{A}_k c_m$
are contained in~$I(\mathbb{B}_{\mathcal{O}})$. Moreover, the polynomial
entries of $\mathcal{A}_k \mathcal{A}_\ell -\mathcal{A}_\ell\mathcal{A}_k$
are in~$I(\mathbb{B}_{\mathcal{O}})$. Therefore also the components of
$$
\mathcal{A}_k c_i -\mathcal{A}_\ell c_j = \mathcal{A}_k (c_i - \mathcal{A}_\ell c_m)
+ (\mathcal{A}_k \mathcal{A}_\ell - \mathcal{A}_\ell\mathcal{A}_k) c_m
-\mathcal{A}_\ell( c_j - \mathcal{A}_k c_m)
$$
are contained in~$I(\mathbb{B}_{\mathcal{O}})$. These components are exactly
the polynomials in~$\AS(i,j)$.
\end{proof}

Another way of phrasing this proposition is to say that, for~$G$ to be
a border basis, the neighbor syzygies
$e_i -x_k e_j$ resp.\ $x_k e_i -x_\ell e_j$ of the border tuple
$(b_1,\dots,b_\nu)$ have to lift to syzygies of $(g_1,\dots,g_\nu)$
and that the defining equations of~$\mathbb{\mathcal{O}}$ are precisely
the equations expressing the existence of these liftings (see~\cite{KK1},
Ex.~23). Now it is a well-known phenomenon in Gr\"obner basis theory
that it suffices to lift a minimal set of generators of the syzygy module
of the leading terms (see for instance~\cite{KR1}, Prop.~2.3.10).
In~\cite{S}, Props.~8.14 and~8.15, an attempt was made to
use a similar idea for removing unnecessary generators
of~$I(\mathbb{B}_{\mathcal{O}})$. However, the claims made there are
not correct in general, as the following examples show.

The first example has surfaced in a number of different contexts,
see the papers~\cite{K}, \cite{L} and the references therein.

\begin{example}\label{exa1xyz}
Let us consider $P=K[x,y,z]$ and $\mathcal{O}=\{1,x,y,z\}$.
The border $\partial\mathcal{O}=\{b_1,\dots,b_6\}$
with $b_1=x^2$, $b_2=xy$, $b_3=xz$, $b_4=y^2$, $b_5=yz$, and $b_6=z^2$
has a very simple border web consisting of nine across-the-street neighbors:

\medskip
\makebox[11 true cm]{
\beginpicture
\setcoordinatesystem units <0.4cm,0.4cm>
\setplotarea x from -0.5 to 4.5, y from -0.9 to 3.5

\put {$\bullet$} at 0 0
\put {$\bullet$} at 2 0
\put {$\bullet$} at 4 0
\put {$\bullet$} at 1 1.5
\put {$\bullet$} at 3 1.5
\put {$\bullet$} at 2 3
\put {$\scriptstyle x^2$}  at -0.3 -0.6
\put {$\scriptstyle y^2$}  at 4.4 -0.6
\put {$\scriptstyle z^2$}  at 2.1 3.7
\put {$\scriptstyle xy$}   at 2 -0.9
\put {$\scriptstyle xz$}  at 0.3 1.5
\put {$\scriptstyle yz$}  at 3.8 1.5
\setlinear
\putrule from 0 0 to 4 0
\putrule from 1 1.5 to 2.8 1.5
\plot
0 0 %
2 3 %
4 0 %
/
\plot
1 1.5 %
2 0 %
3 1.5 %
/

\endpicture}
\medskip

These across-the-street neighbors yield $9\cdot 4 = 36$ quadratic
equations for $I(\mathbb{B}_{\mathcal{O}})$ in $K[c_{11},\dots,c_{46}]$.
Contrary to the claim in~\cite{S}, Prop.\ 8.15, the equations
for the neighbor pair $(x^2,xy)$ are not contained in the ideal
generated by the remaining 32 equations. In fact, in agreement with
Proposition~\ref{removing}, it turns out that the four equations
corresponding to the pair $(xy,xz)$ are contained in the ideal generated
by the eight equations corresponding to the two pairs $(xy,yz)$ and $(xz,yz)$
(see Example~\ref{exa1xyzcont}).

In order to see whether the ideal $I(\mathbb{B}_{\mathcal{O}})$ is a complete
intersection (as claimed in~\cite{S}, p.\ 297), we examine its generators more closely.
If we define a grading by letting
$\deg_W(c_{1j})=2$ for $j=1,\dots,6$ and $\deg_W(c_{ij})=1$
for $i>1$, the 36 generators are homogeneous with respect to the grading
given by~$W$. Every minimal system of generators of the ideal $I(\mathbb{B}_{\mathcal{O}})$
consists of~21 polynomials, while its height is~12.
Hence it is very far from being a complete intersection.

The indeterminates $c_{11},\dots,c_{16}$ corresponding
to the constant coefficients of the generic border basis form
the linear parts of six of the 21 minimal generators and do not divide
any of the other terms. We may eliminate them and
obtain an ideal~$J$ in~$Q=K[c_{21},\dots,c_{46}]$ which has (after interreduction)
15 homogeneous quadratic generators. Geometrically speaking, there is a projection
to an 18-dimensional affine space which maps the border basis scheme
isomorphically to a homogeneous subscheme of~$\mathbb{A}^{18}$.
In fact, it is known that this scheme is an affine cone with 3-dimensional
vertex over the Grassmannian ${\rm Grass}(2,6)\subset \mathbb{P}^{14}$.

The ideal~$J$ is prime and the ring~$Q/J$ is Gorenstein with Hilbert
series $(1+6z+6z^2+1)/(1-z)^{12}$. The minimal number of generators of~$J$ is~15.
The border basis scheme is irreducible and has the expected dimension, namely~12.
\end{example}

Also the lifting of trivial syzygies fails in the border basis scheme
setting, as our next example shows (see also Example~\ref{affinecell}).

\begin{example}\label{liftfails}
Let $P=K[x,y]$ and $\mathcal{O}=\{1,x,y,xy\}$. Then the border of~$\mathcal{O}$
is $\partial\mathcal{O}=\{x^2,y^2,x^2y,xy^2\}$. It has two next-door neighbors
$(x^2,x^2y),\ (y^2,xy^2)$ and one across-the-street neighbor $(x^2y,xy^2)$.
If one includes the ``trivial syzygy pair'' $(x^2,y^2)$, there is one loop
in the border web:

\medskip
\makebox[11 true cm]{
\beginpicture
\setcoordinatesystem units <0.6cm,0.6cm>
\setplotarea x from -0.5 to 2.5, y from -0.5 to 2.5

\put {$\bullet$} at 2 0
\put {$\bullet$} at 2 1
\put {$\bullet$} at 1 2
\put {$\bullet$} at 0 2
\put {$\scriptstyle x^2$}  at 2 -0.5
\put {$\scriptstyle y^2$}  at -0.6 2
\put {$\scriptstyle x^2y$}   at 2.7 1.1
\put {$\scriptstyle xy^2$}  at 1.7 2
\setlinear
\plot
0 2 %
1 2 %
2 1 %
2 0 %
/
\setdashes
\plot
0 2 %
2 0 %
/

\endpicture}
\medskip

The neighbor pairs yield four equations each for the defining
ideal of~$\mathbb{B}_{\mathcal{O}}$. Contrary to a claim
in~\cite{S}, p.\ 297, one cannot drop one of these
sets of four polynomials without changing the ideal.
Thus the lifting of a ``trivial'' syzygy cannot be used to remove
defining equations for the border basis scheme.

Interestingly, in the case at hand, the ideal $I(\mathbb{B}_{\mathcal{O}})$
is indeed a complete intersection: there exists a subset of~8 of the
12~equations which generates~$I(\mathbb{B}_{\mathcal{O}})$ minimally and
$\dim(K[c_{11},\dots,c_{44}]/I(\mathbb{B}_{\mathcal{O}}))=8$.
But the unnecessary generators are spread around the blocks
coming from the neighbor pairs.
\end{example}

Our next example shows how one can sometimes get rid of some generators
of~$I(\mathbb{B}_{\mathcal{O}})$ using part~c) of the proposition.

\begin{example}
Consider $P=K[x,y]$ and $\mathcal{O}=\{1,x,y,x^2,y^2\}$.
Then we have $\partial\mathcal{O}=\{b_1,\dots,b_5\}$ with $b_1=y^3$,
$b_2=xy^2$, $b_3=xy$, $b_4=x^2y$, and $b_5=x^3$,
two next-door neighbors $(xy,xy^2)$ and $(xy,x^2y)$, two proper
across-the-street neighbors $(y^3,xy^2)$ and $(x^2y,x^3)$, and
one pair of across-the-corner neighbors $(xy^2,x^2y)$.
Thus the border web of~$\mathcal{O}$ looks as follows.

\medskip
\makebox[11 true cm]{
\beginpicture
\setcoordinatesystem units <0.6cm,0.6cm>
\setplotarea x from -0.5 to 3.5, y from -0.5 to 3.5

\put {$\bullet$} at 3 0
\put {$\bullet$} at 2 1
\put {$\bullet$} at 1 1
\put {$\bullet$} at 1 2
\put {$\bullet$} at 0 3
\put {$\scriptstyle x^3$}  at 3 -0.5
\put {$\scriptstyle y^3$}  at -0.6 3
\put {$\scriptstyle x^2y$}   at 2.7 1.1
\put {$\scriptstyle xy^2$}  at 1.7 2
\put {$\scriptstyle xy$}  at 1 0.5
\setlinear
\plot
0 3 %
1 2 %
1 1 %
2 1 %
3 0 %
/
\plot
1 2 %
1.5 1.5 %
2 1 %
/
\endpicture}
\medskip

Using part~c) of the proposition, we know that
~$I(\mathbb{B}_{\mathcal{O}})$ is generated by $\AS(1,2)$, $\AS(4,5)$,
$\ND(2,3)$, and $\ND(3,4)$. In fact, using \cocoa, we may check that
none of these sets can be removed without changing the ideal.
\end{example}

On the positive side, the following proposition allows us to
remove at least a few polynomials from the system of generators
of~$I(\mathbb{B}_{\mathcal{O}})$ given in Proposition~\ref{altgenBBS}.

\begin{proposition}{\bf (Removing Redundant Generators of
$I(\mathbb{B}_{\mathcal{O}})$)}\label{removing}\\
Let $\mathcal{O}=\{t_1,\dots,t_\mu\}$ be an order ideal with border
$\partial\mathcal{O}= \{b_1,\dots,b_\nu\}$, and
let~$H$ be a system of generators of~$I(\mathbb{B}_{\mathcal{O}})$.

\begin{items}
\item Suppose that there exist $i,j,k \in\{1,\dots,\nu\}$ and
$\ell,m\in\{1,\dots,n\}$ such that $b_k=x_\ell b_i = x_m b_j$.
If the sets $\AS(i,j)$, $\ND(i,k)$ and $\ND(j,k)$ are contained in~$H$
and one removes one of these sets, the remaining polynomials
still generate~$I(\mathbb{B}_{\mathcal{O}})$.

\item Suppose that there exist $i,j,k \in\{1,\dots,\nu\}$ and
$\alpha,\beta,\gamma\in \{1,\dots,n\}$ such that $x_\alpha b_i =
x_\beta b_j = x_\gamma b_k$. If the sets $\AS(i,j)$, $\AS(i,k)$ and
$\AS(j,k)$ are contained in~$H$ and one removes one of these sets,
the remaining polynomials still generate~$I(\mathbb{B}_{\mathcal{O}})$.

\end{items}
\end{proposition}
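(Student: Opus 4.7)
The plan is to observe that in each of the two parts there is an elementary additive identity among three column vectors whose entries are exactly the three sets of polynomials in question. Entry by entry, such an identity exhibits each polynomial in whichever of the three sets we choose to drop as an explicit $K$-linear combination of polynomials that remain in $H$. Since $H$ is assumed to generate $I(\mathbb{B}_{\mathcal{O}})$, the reduced set will then still generate the same ideal.

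For part~a), I would set up the three column vectors
$$ v_1 \;=\; c_k - \mathcal{A}_\ell c_i, \qquad v_2 \;=\; c_k - \mathcal{A}_m c_j, \qquad v_3 \;=\; \mathcal{A}_\ell c_i - \mathcal{A}_m c_j, $$
whose entries are, by the definitions collected in Proposition~\ref{altgenBBS}, precisely the sets $\ND(i,k)$, $\ND(j,k)$ and $\AS(i,j)$. The assumption $b_k = x_\ell b_i = x_m b_j$ is exactly what makes $(b_i,b_k)$ and $(b_j,b_k)$ next-door neighbors and $(b_i,b_j)$ across-the-street, so that these are indeed the three sets named in the statement. A one-line check gives $v_1 - v_2 + v_3 = 0$, and this identity passes to each coordinate; hence any one of the three sets lies entirely in the $K$-linear span of the other two, which is all that is needed.

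For part~b), with the common multiple $x_\alpha b_i = x_\beta b_j = x_\gamma b_k$, the three vectors
$$ \mathcal{A}_\alpha c_i - \mathcal{A}_\beta c_j, \qquad \mathcal{A}_\beta c_j - \mathcal{A}_\gamma c_k, \qquad \mathcal{A}_\gamma c_k - \mathcal{A}_\alpha c_i $$
sum to zero by a telescoping cancellation. Their entries are precisely the sets $\AS(i,j)$, $\AS(j,k)$ and $\AS(i,k)$, so the same reasoning applies: removing any one of the three sets leaves the remaining two, which together still contain (as $K$-linear combinations of their entries) every polynomial in the removed set.

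There is essentially no obstacle here beyond spotting the right additive cancellation; the argument is a one-line piece of linear algebra in each case. The only point calling for care is the bookkeeping for the sign and index conventions fixing $\ND$ and $\AS$ in Proposition~\ref{altgenBBS}, which must be matched to the way the vectors $v_i$ are written above. Notice that, in contrast with part~c) of Proposition~\ref{altgenBBS}, neither of these identities invokes the commutator $\mathcal{A}_k\mathcal{A}_\ell - \mathcal{A}_\ell \mathcal{A}_k$: the redundancies here are purely combinatorial and hold already at the level of the generic prebasis, before one imposes the defining relations of $\mathbb{B}_{\mathcal{O}}$.
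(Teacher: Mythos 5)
Your proof is correct and takes essentially the same route as the paper: both parts rest on exhibiting the same additive identity among the three column vectors $c_k-\mathcal{A}_\ell c_i$, $c_k-\mathcal{A}_m c_j$, $\mathcal{A}_\ell c_i-\mathcal{A}_m c_j$ in a), and a telescoping cancellation of $\mathcal{A}_\alpha c_i-\mathcal{A}_\beta c_j$, $\mathcal{A}_\beta c_j-\mathcal{A}_\gamma c_k$, $\mathcal{A}_\gamma c_k-\mathcal{A}_\alpha c_i$ in b), read entry by entry. The only thing to note is that your third vector in b) is $-\bigl(\mathcal{A}_\alpha c_i-\mathcal{A}_\gamma c_k\bigr)$, i.e.\ the negation of the $\AS(i,k)$ entries as defined in Proposition~\ref{altgenBBS}; this is harmless for the generating claim and you flag the sign bookkeeping yourself.
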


\begin{proof} Let $\mathcal{A}_1\dots,\mathcal{A}_n$ be the generic
multiplication matrices with respect to~$\mathcal{O}$, and let
$c_j=(c_{1j},\dots,c_{\mu j})^{\rm tr}
\in\Mat_{\mu,1}(K[c_{ij}])$ for $j=1,\dots,\nu$.

First we show~a). The polynomials in $\AS(i,j)$ are the components of
$\mathcal{A}_\ell \cdot c_i-\mathcal{A}_m \cdot c_j$, the polynomials in
$\ND(i,k)$ are the components of $c_k-\mathcal{A}_\ell \cdot c_i$,
and the polynomials in $\ND(j,k)$ are the components of $c_k-\mathcal{A}_m
\cdot c_j$. Thus the claim follows from
$$
(\mathcal{A}_\ell \cdot c_i-\mathcal{A}_m \cdot c_j) +
(c_k-\mathcal{A}_\ell \cdot c_i) - (c_k-\mathcal{A}_m \cdot c_j) =0
$$

To show~b), we argue similarly. The polynomials in $\AS(i,j)$ are the components of
$\mathcal{A}_\alpha \cdot c_i - \mathcal{A}_\beta \cdot c_j$, the polynomials in
$\AS(i,k)$ are the components of $\mathcal{A}_\alpha \cdot c_i -
\mathcal{A}_\gamma \cdot c_k$, and the polynomials in $\AS(j,k)$ are the components
of $\mathcal{A}_\beta \cdot c_j - \mathcal{A}_\gamma \cdot c_k$.
\end{proof}

Let us illustrate the application of this proposition with a couple of
examples.

\begin{example}
Let $P=K[x,y,z]$ and $\mathcal{O}=[1,x,y,z,xy]$. Then we have
$\partial\mathcal{O}=\{b_1,\dots,b_8\}$ with $b_1=z^2$, $b_2=yz$,
$b_3=xz$, $b_4=y^2$, $b_5=x^2$, $b_6=xyz$, $b_7=xy^2$, and $b_8=x^2y$.
There are four next-door neighbors $(yz,xyz)$, $(xz,xyz)$, $y^2,xy^2)$,
$(x^2,x^2y)$ and eight across-the-street neighbors $(yz,z^2)$,
$(xz,z^2)$, $(xz,yz)$, $(y^2,yz)$, $(x^2,xz)$, $(xy^2,xyz)$, $(x^2y,xyz)$,
and $(x^2y,xy^2)$. This yields the border web

\medskip
\makebox[11 true cm]{
\beginpicture
\setcoordinatesystem units <0.6cm,0.6cm>
\setplotarea x from -0.5 to 6.5, y from -0.9 to 5

\put {$\bullet$} at 0 0
\put {$\bullet$} at 2 0
\put {$\bullet$} at 4 0
\put {$\bullet$} at 6 0
\put {$\bullet$} at 1 1.5
\put {$\bullet$} at 3 1.5
\put {$\bullet$} at 5 1.5
\put {$\bullet$} at 3 4.5
\put {$\scriptstyle x^2$}  at -0.3 -0.6
\put {$\scriptstyle x^2y$} at 2 -0.6
\put {$\scriptstyle xy^2$}  at 4 -0.6
\put {$\scriptstyle y^2$}  at 6.4 -0.6
\put {$\scriptstyle xz$}  at 0.3 1.5
\put {$\scriptstyle yz$}  at 5.8 1.5
\put {$\scriptstyle xyz$}  at 3 1.8
\put {$\scriptstyle z^2$}  at 3.4 4.7
\arrow <2mm> [.2,.67] from  1.8  1.5 to 2 1.5
\arrow <2mm> [.2,.67] from  4  1.5 to 3.8 1.5
\arrow <2mm> [.2,.67] from  0.8  0 to 1 0
\arrow <2mm> [.2,.67] from  5  0 to 4.8 0
\setlinear
\putrule from 0 0 to 6 0
\putrule from 1 1.5 to 4.9 1.5
\plot
0 0 %
3 4.5 %
6 0 %
/
\plot
2 0 %
3 1.5 %
4 0 %
/
\setquadratic
\plot
1 1.5 %
3 2.5 %
5 1.5 %
/

\endpicture}
\medskip

\noindent where we have marked next-door neighbors by arrows.
Since we have $x b_2=y b_3 = b_6$, we can use part~a) of the
proposition and remove one of the sets $\AS(2,3)$, $\ND(2,6)$,
or $\ND(3,6)$ from the system of generators of~$I(\mathbb{B}_{\mathcal{O}})$.
Although there are many further ``loops'' in the remaining
part of the border web, we may use \cocoa\ to check that
no other set $\ND(i,j)$ or $\AS(i,j)$ can be
removed without changing the generated ideal.
\end{example}

Using the second part of the proposition, we can remove some generators
of~$I(\mathbb{B}_{\mathcal{O}})$ in Example~\ref{exa1xyz}.

\begin{example}\label{exa1xyzcont}
Consider $P=K[x,y,z]$ and $\mathcal{O}=\{1,x,y,z\}$ with the
border web explained in Example~\ref{exa1xyz}. Then the
border terms $b_2=xy$, $b_3=xz$ and $b_5=yz$ satisfy
$zb_2 = y b_3= x b_5$. Therefore one of the sets
$\AS(2,3)$, $\AS(2,5)$, or $\AS(3,5)$ can be removed from
the system of generators of~$I(\mathbb{B}_{\mathcal{O}})$
without changing the ideal. As already explained in
Example~\ref{exa1xyz}, none of the remaining sets
$\AS(i,j)$ can be removed thereafter.
\end{example}

\bigbreak
%
%

\section{The Homogeneous Border Basis Scheme}
\label{The Homogeneous Border Basis Scheme}

Let $P=K[x_1,\dots,x_n]$ be graded by $W=(w_1\;\cdots\;w_n)
\in\Mat_{1,n}(\mathbb{N}_+)$,
let $\mathcal{O}=\{t_1,\dots,t_\mu\}$ be an order ideal,
and let $\partial\mathcal{O}=\{b_1,\dots,b_\nu\}$ be its border.
If we restrict our attention to zero-dimensional ideals
$I\subset P$ which have an $\mathcal{O}$-border basis and are homogeneous
with respect to the grading given by~$W\!$, we obtain the following
subscheme of the border basis scheme.

\begin{definition}\label{defHBB}
Let $\{c_{ij} \mid 1\le i\le \mu,\;
1\le j\le\nu\}$ be a set of further indeterminates.

\begin{items}
\item The {\em generic homogeneous $\mathcal{O}$-border prebasis}
is defined to be the set of polynomials
$G=\{g_1,\dots,g_\nu\}$ in the ring
$K[x_1,\dots,x_n,c_{11},\dots,c_{\mu\nu}]$ where
$$
g_j = b_j -\sum_{\{i\in\{1,\dots,\mu\}\mid\deg_W(t_i)=\deg_W(b_j)\}} c_{ij}t_i
$$
for $j=1,\dots,\nu$.

\item For $k=1,\dots,n$, let $\mathcal{A}_k \in\Mat_{\mu}(K[c_{ij}])$ be the
$k^{\rm th}$ formal multiplication matrix associated to~$G$.
It is also called the $k^{\rm th}$ {\em generic homogeneous multiplication matrix}
with respect to~$\mathcal{O}$.

\item The affine scheme $\mathbb{B}_{\mathcal{O}}\hom \subseteq \mathbb{A}^{\mu\nu}$
defined by the ideal $I(\mathbb{B}_{\mathcal{O}}\hom)$ generated by the entries of
the matrices $\mathcal{A}_k \mathcal{A}_\ell -\mathcal{A}_\ell \mathcal{A}_k$ with
$1\le k<\ell\le n$ is called the {\em homogeneous $\mathcal{O}$-border basis scheme}.

\end{items}
\end{definition}

Clearly, the homogeneous border basis scheme is the intersection
of~$\mathbb{B}_{\mathcal{O}}$ with the linear space
$\mathcal{Z}(c_{ij}\mid \deg_W(t_i)\ne\deg_W(b_j))$.

\begin{remark}
Let us equip $K[x_1,\dots,x_n,c_{11},\dots,c_{\mu\nu}]$
with the grading defined by the matrix~$\overline{W}$ for which
$\deg_{\overline{W}}(c_{ij})=0$ and $\deg_{\overline{W}}(x_i)=w_i$.

\begin{items}
\item The matrix~$\mathcal{A}_k$ is a homogeneous matrix in the sense
of~\cite{KR2}, Def.~4.7.1, with respect to the degree pair
given by $(\deg_W(t_1),\dots,\deg_W(t_\mu))$ for the rows and
$(\deg_W(x_k t_1),\dots,\deg_W(x_k t_\mu))$ for the columns.

\item As explained in~\cite{KR2}, p.\ 118, we can add a vector
$d\in\mathbb{Z}^\mu$ to a degree pair and still have a degree pair
for the same homogeneous matrix. Thus the matrix $\mathcal{A}_\ell$
also has the degree pair given by $(\deg_W(x_k t_1),\dots,\deg_W(x_k t_\mu))$
for the rows and $(\deg_W(x_k x_\ell t_1),\dots,\deg_W(x_k x_\ell t_\mu))$
for the columns. In this way we see that both $\mathcal{A}_k\mathcal{A}_\ell$
and $\mathcal{A}_\ell \mathcal{A}_k$ are homogeneous matrices with respect to
the degree pair given by $(\deg_W(t_1),\dots,\deg_W(t_\mu))$ for the rows and
$(\deg_W(x_k e_\ell t_1),\dots,\deg_W(x_k x_\ell t_\mu))$ for the columns.
Consequently, also the commutator $\mathcal{A}_k\mathcal{A}_\ell
-\mathcal{A}_\ell\mathcal{A}_k$ is a homogeneous matrix with respect to
this degree pair.

\end{items}
\end{remark}

In order to deform a homogeneous ideal having an $\mathcal{O}$-border basis
to its border form ideal, we may try to construct a suitable
rational curve inside the homogeneous border basis scheme.
If~$\mathcal{O}$ has a $\maxdeg_W$ border (see Definition~\ref{DefMaxdeg}),
this plan can be carried out as follows.

\begin{theorem}{\bf (Homogeneous Maxdeg Border Bases)}\label{homcommute}\\
Suppose that the order ideal~$\mathcal{O}$ has a $\maxdeg_W$ border.

\begin{items}
\item The generic homogeneous multiplication matrices commute.

\item Let $d=\max\{\deg_W(t_1),\dots,\deg_W(t_\mu)\}$, let
$r=\# \{t\in\mathcal{O}\mid \deg_W(t)=d\}$, and let
$s=\# \{t\in \partial\mathcal{O}\mid \deg_W(t)=d\}$.
Then the homogeneous border basis scheme $\mathbb{B}_{\mathcal{O}}\hom$
is an affine space of dimension $r\,s$.

\item If $I\subset P$ is a homogeneous ideal which has an $\mathcal{O}$-border
basis $G=\{g_1,\dots,g_\nu\}$, then there exists a flat family
$K[z]\To K[z][x_1,\dots,x_n]/J$
such that $\mathcal{O}$ is a $K[z]$-basis of the right-hand side,
such that $J\vert_{z \mapsto 1}\cong I$, and such that
$J\vert_{z\mapsto 0}\cong ( b_1,\dots,b_{\nu})$.
In fact, the ideal~$J$ may be defined by
writing $g_j=b_j-\sum_{i=1}^\mu c_{ij}t_i$ and replacing $c_{ij}\in K$ by
$c_{ij}\,z\in K[z]$ for all $i,j$.

\end{items}
\end{theorem}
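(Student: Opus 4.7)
The plan is to prove (a) first, since it immediately implies (b), and then use (a) together with the Flatness Criterion (Corollary~\ref{FlatCrit}) to establish (c). The key preliminary observation is that in the homogeneous setting under the maxdeg hypothesis, an indeterminate $c_{ij}$ can appear in the generic homogeneous prebasis only when $\deg_W(t_i) = \deg_W(b_j)$, and since $\deg_W(t_i) \le d$ while $\deg_W(b_j) \ge d$, this forces $\deg_W(t_i) = \deg_W(b_j) = d$. In particular, for every border term $b_\beta$ with $\deg_W(b_\beta) > d$ we have $g_\beta = b_\beta$, so the corresponding column of each $\mathcal{A}_k$ vanishes, and no $c_{i\beta}$ with $\deg_W(t_i) < d$ is ever used.

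For part (a), I would verify column by column that $(\mathcal{A}_k\mathcal{A}_\ell - \mathcal{A}_\ell\mathcal{A}_k)\,e_j = 0$ by a case analysis on whether $x_k t_j$ and $x_\ell t_j$ lie in $\mathcal{O}$ or in $\partial\mathcal{O}$. If both lie in $\mathcal{O}$, then either $x_k x_\ell t_j \in \mathcal{O}$, in which case both orders of multiplication implement the identical basis shift, or $x_k x_\ell t_j$ is a border term, in which case both orders produce its unique border reduction. If instead $x_\ell t_j = b_\beta \in \partial\mathcal{O}$, then $\mathcal{A}_\ell e_j$ is either zero (when $\deg_W(b_\beta) > d$) or a combination of basis vectors $e_i$ with $\deg_W(t_i) = d$ (when $\deg_W(b_\beta) = d$); in the latter case applying $\mathcal{A}_k$ sends each such $t_i$ to a border term of degree $w_k + d > d$, whose column in $\mathcal{A}_k$ is zero. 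Thus $\mathcal{A}_k\mathcal{A}_\ell e_j = 0$, and a symmetric argument handles $\mathcal{A}_\ell\mathcal{A}_k e_j$ when $x_k t_j \in \partial\mathcal{O}$. The mixed case, in which exactly one of $x_k t_j, x_\ell t_j$ lies in $\mathcal{O}$, is settled by the same degree count: the surviving product also vanishes because the intermediate term $x_k x_\ell t_j$ reached along the $\mathcal{O}$-path necessarily becomes a border term of degree strictly greater than $d$, whose reduction is zero.

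Part (b) is then bookkeeping: by (a) the ideal $I(\mathbb{B}_{\mathcal{O}}\hom)$ is zero, so $\mathbb{B}_{\mathcal{O}}\hom$ is the affine space on the set of indeterminates that actually occur in the generic homogeneous prebasis, and by the preliminary observation these are exactly the $c_{ij}$ with $\deg_W(t_i) = \deg_W(b_j) = d$, numbering precisely $rs$. For part (c), I would replace each coefficient $c_{ij} \in K$ in the given homogeneous $\mathcal{O}$-border basis of $I$ by $z\,c_{ij} \in K[z]$, obtaining a $K[z]$-border prebasis $\widetilde{G}$ of an ideal $J \subseteq K[z][x_1,\dots,x_n]$. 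The formal multiplication matrices of $\widetilde G$ arise from the generic homogeneous multiplication matrices by the specialization $c_{ij} \mapsto z\,c_{ij}$, and since the generic homogeneous multiplication matrices commute as formal polynomial identities by (a), so do these specializations. Corollary~\ref{FlatCrit} then yields the desired flat family with $\mathcal{O}$ as a $K[z]$-basis, and specializing $z \mapsto 1$ and $z \mapsto 0$ recovers $I$ and $(b_1,\dots,b_\nu)$ respectively. The main obstacle is the case analysis in (a); once it is carried out, (b) and (c) follow with no further work.
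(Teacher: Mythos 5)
Your proposal is correct and follows essentially the same line of argument as the paper. The paper proves (a) by fixing an entry $(\alpha,\beta)$ of $\mathcal{A}_k\mathcal{A}_\ell$ and showing that nonvanishing forces $x_\ell t_\beta\in\mathcal{O}$ (so the entry depends only on $x_kx_\ell t_\beta$, which is symmetric), whereas you organize the same degree count as a column-by-column case analysis; these are two ways of presenting the identical observation that the maxdeg hypothesis collapses any path through the border to zero. Parts (b) and (c) are then handled identically, up to your invoking Corollary~\ref{FlatCrit} where the paper cites Corollary~\ref{ratcurve} directly (the former is itself a consequence of the latter, so this is no real difference).
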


\begin{proof}
To prove claim~a), we examine the entry
at position $(\alpha,\beta)$ of a product $\mathcal{A}_k\mathcal{A}_\ell$.
Let $\mathcal{A}_k=(a_{ij})$ and $\mathcal{A}_\ell=(a'_{ij})$.
We want to examine the element
$\sum_{\gamma=1}^\mu a_{\alpha\gamma}a'_{\gamma\beta}$.
If $a'_{\gamma\beta}\ne 0$, the term $t_\gamma$ is contained in the support of
the representation of $x_\ell\,t_\beta$ in terms of the basis~$\mathcal{O}$.
Since~$(g_1,\dots,g_\nu)$ is a homogeneous ideal with respect to the grading on
$K[x_1,\dots,x_n,c_{11},\dots,c_{\mu\nu}]$ defined by the matrix~$\overline{W}$
for which $\deg_{\overline{W}}(c_{ij})=0$ and
$\deg_{\overline{W}}(x_i)=\deg_W(x_i)=w_i$, we have the relations
$\deg_W(t_\gamma)=\deg_W(x_\ell t_\beta) > \deg_W(t_\beta)$.
For the same reason, if $a_{\alpha \gamma}\ne 0$ we have the relations
$\deg_W(t_\alpha)=\deg_W(x_k  t_\gamma) > \deg_W(t_\gamma)$.
We deduce the inequality $\deg_W(t_\alpha) > \deg_W(x_\ell t_\beta)$.
Hence the assumption that~$\mathcal{O}$ has a $\maxdeg_W$ border
implies $x_\ell t_\beta \notin \partial\mathcal{O}$.
We conclude that $x_\ell t_\beta \in \mathcal{O}$, $t_\gamma = x_\ell t_\beta$,
and hence $a'_{\gamma \beta} = 1$.
Therefore, in order to get $a_{\alpha \gamma} a'_{\gamma \beta} \ne 0$
in the sum above, we need to have $a'_{\gamma \beta} = 1$
and $t_\gamma=x_\ell t_\beta$. In particular, this condition fixes~$\gamma$.

If the surviving summand $a_{\alpha \gamma}$ of
$\sum_{\gamma=1}^\mu a_{\alpha\gamma}a'_{\gamma\beta}$
is not zero, there are two possibilities.
Either we have $t_\alpha = x_k t_\gamma$ and thus $a_{\alpha \gamma} = 1$,
or we have $x_k t_\gamma = b_j$, $t_\alpha \in {\rm Supp}(b_j - g_j)$,
and hence $a_{\alpha \gamma} = c_{\alpha j}$.
In the first case, we have $t_\alpha = x_k x_\ell t_\beta$.
In the second case, we have $x_k x_\ell t_\beta = b_j$ and
$t_\alpha \in {\rm Supp}(b_j - g_j)$.
Now it is clear that if we examine the product
$\mathcal{A}_\ell\mathcal{A}_k$, we get the same conditions.
Therefore we conclude that
$\mathcal{A}_k\mathcal{A}_\ell = \mathcal{A}_\ell\mathcal{A}_k$.

Next we show~b). The entries of the
commutators $\mathcal{A}_k\mathcal{A}_\ell -\mathcal{A}_\ell\mathcal{A}_k$
are the defining equations of the scheme~$\mathbb{B}_{\mathcal{O}}\hom$ in the
affine subspace $\mathcal{Z}(c_{ij}\mid \deg_W(t_i)\ne\deg_W(b_j))$
of~$\mathbb{A}^{\mu\nu}$.
By~a), these commutators are all zero. The number $r\,s$ is precisely
the dimension of this affine subspace.

To show~c), it now suffices to connect the given point
in this affine space by a line to the origin and to apply
Corollary~\ref{ratcurve}.
\end{proof}

If an ideal~$I$ has an $\mathcal{O}$-border basis and
$\mathcal{O}$ has a $\maxdeg_W$ border for some grading
given by a matrix $W\in\Mat_{1,n}(\mathbb{N}_+)$, we can
combine the two flat families of Theorem~\ref{DFdeform}
and part~c) of the theorem above. As an illustration, we continue
the discussion of Example~\ref{deftoDFex}.

\begin{example}\label{exdefcontinued}
Let~$I$ be the ideal $I=( x^2+xy -\frac{1}{2}y^2-x-\frac{1}{2}y,\, y^3-y,\,
xy^2-xy)$ in~$K[x,y]$, where ${\rm char}(K)\ne 2$, and
let $\mathcal{O}=\{1,x,x^2,y,y^2\} \subset \mathbb T^2$.
Using the fact that~$\mathcal{O}$ has a
$\maxdeg_W$ border with respect to the standard grading, we have
already deformed~$I$ to~$\DF_W(I)=( x^3,\, x^2y,\,
xy+x^2-\frac{1}{2}y^2,\, xy^2,\, y^3)$.

Now we apply the theorem. We equip the summands $x^2$ and $y^2$ in the
third polynomial with a factor~$z$ and get $J=( x^3,\, x^2y,\,
xy+zx^2-\frac{1}{2}zy^2,\, xy^2,y^3)$.
As we now let $z\To 0$, we get the border form ideal of~$I$. This is a
flat deformation by part~c) of the theorem. We can also directly check that
the multiplication matrices
$$
\mathcal{A}_x= \begin{pmatrix}
0 & 0 & 0 & 0 & 0 \cr
1 & 0 & 0 & 0 & 0 \cr
0 & 1 & 0 & -z & 0 \cr
0 & 0 & 0 & 0 & 0 \cr
0 & 0 & 0 & \frac{1}{2}z & 0
\end{pmatrix}
\quad \hbox{\rm and}\quad \mathcal{A}_y =
\begin{pmatrix}
0 & 0 & 0 & 0 & 0 \cr
0 & 0 & 0 & 0 & 0 \cr
0 & -z & 0 & 0 & 0 \cr
1 & 0 & 0 & 0 & 0 \cr
0 & \frac{1}{2}z & 0 & 1 & 0
\end{pmatrix}
$$
commute as elements of $\Mat_5(K[z])$.
\end{example}

Notice that, at least following the approach taken here, it is not possible
to connect~$I$ to~$\BT_{\mathcal{O}}$ using just one irreducible rational
curve on the border basis scheme.
The next example shows that the $\maxdeg$ border property
is indispensable for the theorem to hold.

\begin{example}\label{secondexample}
The order ideal
$\mathcal{O}=\{1,x,y,x^2,xy,y^2,x^2y,xy^2,x^2y^2\}\subseteq \mathbb{T}^2$.
does not have a $\maxdeg_W$ border
with respect to any grading given by a matrix $W\in\Mat_{1,2}(\mathbb{N}_+)$.
The generic homogeneous $\mathcal{O}$-border basis is
$G=\{g_1,\dots,g_6\}$ with
$g_1 = y^3-c_{71}x^2y - c_{81}xy^2$, $g_2=x^3-c_{72}x^2y -c_{82}xy^2$,
$g_3=xy^3-c_{93}x^2y^2$, $g_4=x^3y-c_{94}x^2y^2$, $g_5=x^2y^3$, and
$g_6=x^3y^2$.

For the defining ideal of~$\mathbb{B}_{\mathcal{O}}\hom$, we find
$( c_{82}c_{93}+c_{72}-c_{94},\, c_{71}c_{94}+c_{81}-c_{93})$.
Hence~$\mathbb{B}_{\mathcal{O}}\hom$ is not a 2-dimensional
affine space (as would be the case if the theorem were applicable),
but isomorphic to a 4-dimensional affine space
via the projection to $\mathcal{Z}(c_{72},c_{81})$.
\end{example}

Another consequence of the theorem is that the homogeneous border basis
scheme can have a dimension which is higher than $n\mu$, the natural
generalization of the dimension of~$\mathbb{B}_{\mathcal{O}}$ for $n=2$
(see Remark~\ref{BBSprops}).

\begin{example}{\bf (Iarrobino)}\label{exIarrobino}
In the paper~\cite{I} Iarrobino proves that Hilbert schemes need not be irreducible
(see also~\cite{MS}, Theorem 18.32).
In particular, he produces an example which can easily be explained
using homogeneous border basis schemes. Let $\mathcal{O}$
be an order ideal in~$\mathbb{T}^3$ consisting of all terms of
degree $\le 6$ and 18 terms of degree seven.
The we have $d=7$ and $r=s=18$ in part~b) of the theorem.
Hence~$\mathbb{B}_{\mathcal{O}}\hom$ is isomorphic to an affine space
of dimension 324. In particular, it follows that $\dim(\mathbb{B}_{\mathcal{O}})
\ge 324$. On the other hand, the irreducible component
of~$\mathbb{B}_{\mathcal{O}}$ containing the points corresponding to
reduced ideals has dimension $3\cdot\mu=3\cdot 102=306$.
\end{example}

In the maxdeg border case, we can also compare the dimension
of~$\mathbb{B}_{\mathcal{O}}\hom$
to the dimension of the {\em zero fiber}~$Z$, i.e.\ the dimension of
the subscheme of~$\mathbb{B}_{\mathcal{O}}$ parametrizing schemes
supported at the origin.
Since~$\mathbb{B}_{\mathcal{O}}\hom$ is contained in~$Z$,
the preceding example implies that the dimension of~$Z$ can be
larger than~$n\mu$, the dimension of the irreducible component
of~$\mathbb{B}_{\mathcal{O}}$ containing the points corresponding to
reduced ideals. For $n=2$, a more precise estimate is available.

\begin{example}
Let $n=2$. Then the dimension of~$Z$ is~$\mu-1$ by~\cite{B}.
If~$\mathcal{O}$ has a maxdeg border then the theorem yields $s=d+1-r$
and $\dim(\mathbb{B}_{\mathcal{O}}\hom)=r(d+1-r)\le (\frac{d+1}{2})^2$.
This agrees with $\mathbb{B}_{\mathcal{O}}\hom\subseteq Z$ since
$(\frac{d+1}{2})^2 \le \frac{d(d+1)}{2}+r-1 = \mu-1$.
\end{example}

Let us end this section with an example application of Theorem~\ref{homcommute}.

\begin{example}
In~\cite{MS}, Example 18.9, the authors consider the
ideal $I=(x^2-xy,\,\allowbreak y^2-xy,\, x^2y,\, xy^2)$
in the ring $\mathbb{C}[x,y]$.
It has a border basis with respect to the order ideal
$\mathcal{O} = \{1,x,y,xy\}$, i.e.\ it corresponds to a point
in~$\mathbb{B}_{\mathcal{O}}$. It is clear that no matter
which term ordering~$\sigma$ one chooses,
it is not possible to get $\mathcal{O}_\sigma(I) = \mathcal{O}$, since
$x^2>_\sigma xy$ implies $xy >_\sigma y^2$, and therefore
$xy \notin \mathcal{O}_\sigma(I)$.
The consequence is that if one wants to connect~$I$ to
a monomial ideal in the Hilbert scheme, the deformation to~$\LT_\sigma(I)$
with respect to any term ordering~$\sigma$
leads to a monomial ideal which is not $(x^2, y^2)$,
i.e.\ not in~$\mathbb{B}_{\mathcal{O}}$.

On the other hand, by Example~\ref{affinecell}, we know
that it is possible to deform the ideal~$I$ to $(x^2, y^2)$.
But we can do even better: since the ideal~$I$ is homogeneous,
it belongs to the family parametrized by the homogeneous border basis scheme
$\mathbb{B}_{\mathcal{O}}\hom$ which is an
affine space by Theorem~\ref{homcommute}.
The full family of homogeneous ideals is $(x^2-zaxy,\ y^2-zbxy,\ x^2y,\ xy^2)$.
Putting $a = b = 1$, we get the desired flat deformation
$\Phi: K[z] \To \mathbb{C}[x,y,z]/(x^2-zxy,\, y^2-zxy,\, x^2y,\, xy^2)$.
\end{example}

\bigbreak

\subsection*{Acknowledgements}
Part of this work was conducted during the Special Semester on Gr\"obner
Bases, February 1 to July 31, 2006, organized by the RICAM Institute
(Austrian Academy of Sciences) and the RISC Institute (Johannes Kepler
University) in Linz, Austria. The authors thank these institutions, and in
particular Prof.\ Bruno Buchberger, for the warm hospitality and the
excellent work atmosphere they experienced in Linz.

\bigbreak

\end{document}